%% filename: amsart-template.tex
%% version: 1.1
%% date: 2014/07/24
%%
%% American Mathematical Society
%% Technical Support
%% Publications Technical Group
%% 201 Charles Street
%% Providence, RI 02904
%% USA
%% tel: (401) 455-4080
%%      (800) 321-4267 (USA and Canada only)
%% fax: (401) 331-3842
%% email: tech-support@ams.org
%% 
%% Copyright 2008-2010, 2014 American Mathematical Society.
%% 
%% This work may be distributed and/or modified under the
%% conditions of the LaTeX Project Public License, either version 1.3c
%% of this license or (at your option) any later version.
%% The latest version of this license is in
%%   http://www.latex-project.org/lppl.txt
%% and version 1.3c or later is part of all distributions of LaTeX
%% version 2005/12/01 or later.
%% 
%% This work has the LPPL maintenance status `maintained'.
%% 
%% The Current Maintainer of this work is the American Mathematical
%% Society.
%%
%% ====================================================================

%     AMS-LaTeX v.2 template for use with amsart
%
%     Remove any commented or uncommented macros you do not use.

\documentclass[12pt,a4paper,oneside]{amsart}

\usepackage{geometry}
\newtheorem{theorem}{Theorem}[section]
\newtheorem{lemma}[theorem]{Lemma}

\theoremstyle{definition}
\newtheorem{definition}[theorem]{Definition}

\theoremstyle{remark}
\newtheorem{remark}[theorem]{Remark}

%\numberwithin{equation}{section}

\theoremstyle{corollary}
\newtheorem{corollary}[theorem]{Corollary}

\theoremstyle{question}
\newtheorem{question}[theorem]{Question}

\theoremstyle{conjecture}
\newtheorem{conjecture}[theorem]{Conjecture}

\numberwithin{equation}{section}

\usepackage{hyperref}

\usepackage[all]{xy}
\usepackage{amssymb}
\usepackage{tikz}
\usepackage{tikz-cd}
\usetikzlibrary{
	calc,
	decorations.pathmorphing,
	matrix,arrows,
	positioning,
	shapes.geometric
}
\usepackage{pgfplots}
\pgfplotsset{compat=newest}

\begin{document}

\title[Topological properties on isochronous centers]{ Topological properties on isochronous centers of  polynomial Hamiltonian differential systems}

%    Remove any unused author tags.

%    author one information
\author[G. Dong]{Guangfeng Dong}
\address{Department of Mathematics, Jinan University, Guangzhou 510632, China}
\curraddr{}
\email{donggf@jnu.edu.cn}

%\author{Changjian Liu}
%\address{School of Mathematics(Zhuhai), Sun Yat-Sen University, Zhuhai 519082,  China}
%\curraddr{}
%\email{liuchangj@mail.sysu.edu.cn}

%\author{Jiazhong  Yang}
%\address{ School of Mathematical Sciences, Peking University, Beijing 100871,  China}
%\curraddr{}
%\email{jyang@math.pku.edu.cn}

\thanks{}

%    author two information
%\author{}
%\address{}
%\curraddr{}
%\email{}
%\thanks{}

\subjclass[2010]{Primary: 34M35, 34C05;  Secondary: 34C08; }

\keywords{Hamiltonian differential systems; isochronous center; vanishing cycle;  Jacobian conjecture}

%\date{}

%\dedicatory{}

\begin{abstract}
In this paper, we study the topological properties of complex polynomial Hamiltonian differential systems of degree $n$ having an isochronous center.
Firstly, we prove that if  the critical level curve 
possessing  an isochronous center contains only a single singular point, and the period $1$-form does not have poles with zero residue at infinity on level curves sufficiently close to the critical curve,
then the vanishing cycle associated to this center is trivial in the 1-dimensional homology group of the projective closure of a generic level curve.
Our result provides a positive answer to a question asked by L. Gavrilov under 
relatively simple conditions and can be applied to achieve an equivalent description of  the Jacobian conjecture on $\mathbb{C}^2$.
Secondly, we obtain a very simple but useful
 necessary condition for isochronicity of Hamiltonian systems, which is that
the $(n+1)$-degree part of the Hamiltonian function must have a factor with multiplicity no less than $(n+1)/2$.
Thirdly, we show a relation between Gavrilov's question and the conjecture proposed by X. Jarque and J. Villadelprat on the non-isochronicity of real Hamiltonian systems of even degree $n$.
\end{abstract}

\maketitle
%\tableofcontents
%\maketitle

\section{Introduction and main results}
\label{intro}
%\subsection{Introduction}

Consider the following complex polynomial Hamiltonian differential systems of degree $n$
\begin{eqnarray}\label{H-C}
   \left(
   \begin{array}{c}
   	\frac{d x}{d t} \\
   	\frac{d y}{d t} \\
   \end{array}
   \right)
   =
  \left(
   \begin{array}{r}
   	-\frac{\partial H}{\partial y} \\
   	\frac{\partial H}{\partial x} \\
   \end{array}
   \right),  \  \ (x,y)\in \mathbb{C}^2,\  t\in \mathbb{C},
\end{eqnarray}
where the   Hamiltonian function
$H(x,y)$ is a polynomial of degree $n+1$ in $ \mathbb{C}[x,y]$. 
Assuming the origin $O$ is a center of Morse type, without loss of generality,
$H(x,y)$ can be written as  $H(x,y)=(x^2 +y^2)/2+ h.o.t.$.
For a generic  level curve $L_h$  defined by the algebraic equation $H(x,y)=h$ 
where $h\in \mathbb{C}$ is sufficiently close to $0$,
one can associate a  \emph{vanishing cycle} $\gamma_h$ to the critical value $h=0$,
which is a 1-dimensional cycle vanishing at $h=0$ in the 1-dimensional homology group
$\mathcal{H}_{1}(L_{h},\mathbb{Z})$ and
can be characterized by the following purely
topological property: 
modulo orientation and the free homotopy deformation on $L_{h}$, 
as $h\rightarrow 0$,
the cycle $\gamma_h$ can be represented by a
continuous family of loops on $L_{h}$ of length that tends to zero.
This description explains the terminology(see, e.g., \cite{Ilya-Yako}).
Respectively $T(h)=\oint_{\gamma_h} dt$ is called a period function of system (\ref{H-C}).
If $T(h)$ is a nonzero constant independent of $h$ for $h\neq 0$,
then the origin is called an \textit{isochronous center}.
This definition coincides with the classical isochronous center when $(x,y)\in \mathbb{R}^2$ and $t\in \mathbb{R}$.

%Clearly  system (\ref{H-C}) can be linearized by the map $\Phi$
%and has an isochronous center at the origin.
One of the most important problems on isochronous centers is 
to describe the role of the vanishing cycle $\gamma_h$ in  the 1-dimensional homology group 
of the compact Riemann surface of $L_h$.
It is still an open problem until now. 
In \cite{Gavri}, L. Gavrilov has asked the following question for systems (\ref{H-C}) with only isolated singularities:
\begin{question}[Gavrilov's question]\label{topo-iso-q}
Is it true that if a Morse singular point is isochronous, then the associated vanishing cycle
 represents a zero homology cycle on the Riemann surface of the level curve $L_h$?
\end{question}

%$$H(x,y)=x^2 (x^2+2)(x^2+4)+2x^2 (x^2+1)(x^2+2)(x^2+3)y+(x^2+1)^4(x^2+2)y^2.$$

In  general cases, the above question has a negative answer.
Example 3.23 in reference \cite{C-M-V} provides a 
system with
$$H(x,y)=x^2 (x^2+2)(x^2+4)+2x^2 (x^2+1)(x^2+2)(x^2+3)y+(x^2+1)^4(x^2+2)y^2,$$ 
which has an isochronous center at the origin,
but the corresponding vanishing cycle is not homologous to zero  on the Riemann surface of $L_h$.
In this counterexample,
it is not difficult to see that the critical level curve $L_0$ contains at least three  different singularities on $\mathbb{C}^2$.

What conditions can give a positive answer to Gavrilov's question? 
%Namely, for what systems (\ref{H-C}), Gavrilov question has a positive answer? 
This  is also an important and meaningful question,
especially it is closely related with  the famous Jacobian conjecture on $\mathbb{C}^2$,
which asserts that
the following  polynomial map with a constant Jacobian determinant
\begin{eqnarray}\label{J-Homo}
	\begin{array}{lrll}
		\Phi_P: &\mathbb{C}^2 &\longrightarrow &\mathbb{C}^2 \\
		& (x,y)&\longmapsto &	(f(x,y), g(x,y))
	\end{array}
\end{eqnarray}
is a global homeomorphism, where $f=x+h.o.t.$ and $ g=y+ h.o.t. $  are polynomials in $ \mathbb{C}[x,y]$.
%This conjecture was first formulated by %O.H. Keller in 1939 (see \cite{B-C-W} %for a survey).
At present it has been proved only when the degrees
of $f$ and $g$ are not too large.
Obviously the map $\Phi_P $  induces a Hamiltonian system 
\begin{eqnarray}\label{J-H-C}
	\left(
	\begin{array}{c}
		\frac{d x}{d t} \\
		\frac{d y}{d t}\\
	\end{array}
	\right)=
	\left(
	\begin{array}[2]{r}
		-f\frac{\partial f}{\partial y} -g\frac{\partial g}{\partial y}\\
		f\frac{\partial f}{\partial x} +g\frac{\partial g}{\partial x}
	\end{array}
	\right)=
	\left(
	\begin{array}[2]{r}
		-\frac{\partial H}{\partial y}\\
		\frac{\partial H}{\partial x}
	\end{array}
	\right)
\end{eqnarray}
having  an isochronous center of Morse type at the origin with the Hamiltonian function
$H(x,y)=(f^2 +g^2)/2$.

Also in  \cite{Gavri}, Proposition 6.1   says that  
if  the vanishing cycle associated to the origin for system (\ref{J-H-C}) represents a zero homology cycle on the Riemann surface of a generic level curve, then the map $\Phi_P $ is injective, which suffices to guarantee the Jacobian conjecture  is true.
In addition, he has also proved that(Theorem 4.1 of  \cite{Gavri}) Question \ref{topo-iso-q} has  a positive answer under the conditions that the critical level curve $L_0$ contains only a single singular point which is isochronous
and $H(x,y)$ is a `good' polynomial having only isolated and simple singularities,
where the definition of a good polynomial depends on  the Milnor numbers of the  complex projective closure $\overline{L_h}$ of $L_h$ at infinity.

%However, the latter condition imposed on $H(x,y)$ seems too complicate to verify and apply.
This paper is devoted to look for other conditions 
 to give a positive answer to Question \ref{topo-iso-q}. 
 Denote by $$\omega=dt = -\frac{d x}{H_{y}  },\  \ H_{y}=\frac{\partial H}{\partial y}, $$ the period $1$-form of system (\ref{H-C}).
We have the following main theorem.
\begin{theorem}\label{h-0-c}
For system (\ref{H-C}), if the critical level curve $L_0$ contains a single singularity which is an isochronous center of Morse type, and the period $1$-form $\omega$ does not have poles with zero residue at infinity for any $h$  sufficiently close to $0$, then the associated vanishing cycle $\gamma_h$ is trivial in $\mathcal{H}_1(\overline{L_h},\mathbb{Z})$.
\end{theorem}
%Example 3.23 in reference \cite{C-M-V} indicates 
%that it is hard to weaken the condition of Theorem \ref{h-0-c} 

Applying the above theorem to system (\ref{J-H-C}), 
one can achieve an equivalent description of the Jacobian conjecture. 

\begin{corollary}\label{J-C}
 The  polynomial map $\Phi_P$ with constant Jacobian determinant  is a global homeomorphism,
if and only if  two algebraic curves $f=0$ and $g=0$ intersect only at a single point on $\mathbb{C}^2$. 
\end{corollary}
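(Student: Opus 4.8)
The plan is to establish the two implications of the equivalence separately, treating the forward direction as essentially formal and reserving the real content for the converse, where Theorem~\ref{h-0-c} together with Gavrilov's injectivity criterion does all of the work. Throughout I would keep in view the two structural facts already recorded in the text: that $\Phi_P$ induces the Hamiltonian system~(\ref{J-H-C}) with $H=(f^2+g^2)/2$ possessing an isochronous center of Morse type at the origin, and that the singular points of the critical level curve $L_0=\{H=0\}$ coincide exactly with the common zeros $\{f=0\}\cap\{g=0\}$. Since $f=x+h.o.t.$ and $g=y+h.o.t.$ both vanish at the origin, the origin always lies in this intersection, a remark I will use repeatedly.

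For the necessity direction I would argue as follows. If $\Phi_P$ is a global homeomorphism then it is in particular injective, so the fibre $\Phi_P^{-1}(0,0)$ is a single point. But $\Phi_P^{-1}(0,0)=\{f=0\}\cap\{g=0\}$, so the two curves meet at exactly one point, which by the remark above is the origin. This step invokes nothing beyond injectivity and costs no effort.

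The converse is where the main theorem enters. Assume that $\{f=0\}\cap\{g=0\}$ reduces to a single point; by the preceding remark that point is the origin, so $L_0$ carries exactly one singularity, and by the construction of~(\ref{J-H-C}) it is precisely the isochronous center of Morse type at the origin. The hypotheses of Theorem~\ref{h-0-c} are thus satisfied, and I would apply it to conclude that the vanishing cycle $\gamma_h$ is null-homologous on the compact Riemann surface of a generic $L_h$. Feeding this into the result of Gavrilov recalled from \cite{Gavri}---that a null-homologous vanishing cycle forces $\Phi_P$ to be injective, which in turn suffices to guarantee the Jacobian conjecture---I obtain that $\Phi_P$ is an automorphism of $\mathbb{C}^2$ and hence a global homeomorphism.

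I expect no serious analytic obstacle in the corollary itself: the genuine depth resides in Theorem~\ref{h-0-c}, and the statement is really a packaging of that theorem with the already-cited criterion of Gavrilov. The only points that demand care are verifying that a single intersection point is forced to be the origin and to be of the exact isochronous Morse type that Theorem~\ref{h-0-c} requires, and making explicit the passage from injectivity to a global homeomorphism; the latter rests on the classical fact that an injective polynomial self-map of $\mathbb{C}^2$ with nonzero constant Jacobian is an automorphism, which is exactly what is encapsulated in the statement that injectivity suffices for the Jacobian conjecture.
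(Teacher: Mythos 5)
Your proposal is correct and takes essentially the same route as the paper: the paper also derives Corollary \ref{J-C} by observing that the nonvanishing Jacobian forces the singularities of $L_0=\{(f^2+g^2)/2=0\}$ to be exactly the intersection points of $f=0$ and $g=0$, and then combines Theorem \ref{h-0-c} with Gavrilov's criterion (null-homologous vanishing cycle $\Rightarrow$ $\Phi_P$ injective $\Rightarrow$ Jacobian conjecture holds for $\Phi_P$), the forward direction being the trivial injectivity remark. Your write-up simply makes explicit the details the paper leaves implicit, such as the single intersection point necessarily being the origin and the passage from injectivity to global homeomorphism.
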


To prove Theorem \ref{h-0-c}, we will carefully study 
some real systems
induced by complex system (\ref{H-C}) 
and the corresponding  transformation linearizing an isochronous center.
Such systems possess many good properties, such as commutativity, transversality, and so on.
Besides, 
their topological structures near the points at infinity  on $L_h$ can also provide for us a lot of information for the isochronicity of system (\ref{H-C}).
Letting $H_{n+1}(x,y)$ be the highest degree part of  $H(x,y)$,
 we have the following necessary condition for isochronicity:
\begin{theorem}\label{non-iso}
For system (\ref{H-C}),	 if the origin  is an isochronous center,
	then $H_{n+1}$ must have a factor with multiplicity  no less than $ (n+1)/2$.
\end{theorem}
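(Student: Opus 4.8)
The plan is to translate the isochronicity into a constraint on the divisor at infinity of $\bar L_h$. Over $\mathbb{C}$ write $H_{n+1}(x,y)=c\prod_i(\beta_i x-\alpha_i y)^{m_i}$ with $\sum_i m_i=n+1$; the points at infinity of the projective closure $\bar L_h$ are exactly $p_i=[\alpha_i:\beta_i:0]$, they are independent of $h$, and $m_i$ is the intersection multiplicity of $\bar L_h$ with the line at infinity at $p_i$. Since $m_1+m_2=n+1$ already forces $\max_i m_i\ge(n+1)/2$ whenever there are at most two such points, the real content of the theorem is to exclude the \emph{balanced} configurations in which three or more distinct directions at infinity all carry multiplicity $<(n+1)/2$. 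First I would use the linearizing transformation supplied by isochronicity: there is a local analytic (symplectic) change of variables taking $H$ to $\tfrac12(U^2+V^2)$, and setting $P=U+iV$, $Q=U-iV$ one has $PQ=2H$ and $dt=\frac1i\,\frac{dP}{P}$ along $L_h$ near the origin, so that the vanishing cycle $\gamma_h$ is the locus $P=\sqrt{2h}\,e^{i\theta}$ on which $P$ has winding number $1$.

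The key reduction is that this family of cycles escapes to infinity. On $\gamma_h$ both $P$ and $Q=2h/P$ have modulus $\sqrt{2|h|}$, so as $h\to\infty$ the cycle leaves every compact set and accumulates on the divisor at infinity (equivalently, after rescaling by $\sqrt h$, on the cone $H_{n+1}=0$, a union of the lines $\beta_i x-\alpha_i y=0$ with weights $m_i$). Hence the constant value $T(h)\equiv T_0$ is dictated entirely by the geometry of $\bar L_h$ and of the induced real flows near the points $p_i$. This is where the real systems induced by (\ref{H-C}) enter: I would study $\mathrm{Re}\,X_H$ and $\mathrm{Im}\,X_H$, which commute and are transverse off the critical set, and record how a trajectory of $\mathrm{Re}\,X_H$ (a real representative of $\gamma_h$) passes near each $p_i$.

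The local step is to put $\bar L_h$ and the time form into normal form near $p_i$ as a function of $m_i$. In the coordinates $Y=y/x$, $Z=1/x$ the curve reads $H_{n+1}(1,Y)+H_n(1,Y)Z+\cdots=hZ^{n+1}$, so generically $Z\sim -\,Y^{m_i}$ and $Y$ is a uniformizer; a direct computation then gives $dt\sim C_i\,Y^{m_i(n-2)}\,dY$, which fixes the rotation that the argument of $P$ accumulates as the escaping orbit rounds $p_i$, in terms of $m_i$ and $n$. Summing these local windings and imposing that $\gamma_h$ closes up with total winding number exactly $1$, uniformly in $h$, yields a resonance relation among the $m_i$. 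The plan is to show that this relation, together with $\sum_i m_i=n+1$ and the $P\leftrightarrow Q$ (time-reversing) symmetry forced by isochronicity, is incompatible with every balanced configuration, leaving $\max_i m_i\ge(n+1)/2$.

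I expect the main obstacle to be twofold. First, the normal form above is the \emph{generic} one; when the coefficient of the $Z$-term degenerates, the point $p_i$ can be a singular point of $\bar L_h$ with higher tangency to the line at infinity, and each such case needs its own local model before the winding count is valid. Second, and more serious, is the uniform-in-$h$ control of the accumulation of the periodic orbits on the divisor at infinity: one must guarantee that the passage of $\gamma_h$ near the various $p_i$ is captured by the local models simultaneously for all large $h$, so that the local rotation data genuinely patch into the global constraint $\oint_{\gamma_h}dt=T_0$. Establishing this uniformity, by exploiting the commutativity and transversality of the induced real flows, is the technical heart of the argument.
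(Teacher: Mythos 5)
Your proposal stops exactly where the proof has to begin. The ``resonance relation among the $m_i$'' that is supposed to exclude the balanced configurations is never written down, let alone shown to be incompatible with $\max_i m_i<(n+1)/2$; and the two ``obstacles'' you list at the end (degenerate local models, uniform-in-$h$ patching) are not technical residue but the entire content of the theorem. Worse, your own generic computation undercuts the plan: from $Z\sim cY^{m_i}$ you correctly obtain $dt\sim C_i\,Y^{m_i(n-2)}\,dY$, which for $n\ge 2$ is \emph{holomorphic} at $Y=0$ (order $m_i(n-2)\ge 0$), so at every point at infinity covered by your normal form the time form contributes no pole, no residue, and no rotation --- the local winding data you propose to sum is trivial in precisely the cases you can compute, and all the information sits in the degenerate branches you postpone. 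There are further soft spots: the coordinates $P,Q$ exist only on the domain of the linearizing map near the origin, so ``$|P|=|Q|=\sqrt{2|h|}$ on $\gamma_h$'' is meaningless once $h$ is large (the escape from compact sets is trivially true anyway, since $|H|$ is bounded on compacts); continuing both the cycle and the identity $T(h)\equiv T_0$ out to $h\to\infty$ requires monodromy and atypical-value control you do not address; and $\sqrt h$ is the degenerate rescaling --- the limit that actually sees the curve is $h^{1/(n+1)}$, giving $H_{n+1}=1$, not the cone $H_{n+1}=0$.

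The paper's proof avoids all of this by never leaving a fixed generic level curve with $h$ small, and it needs only \emph{one} point at infinity rather than a global balance over all of them. Isochronicity forbids closed orbits of the imaginary-time flow ${\mathrm i}V_h$ through the vanishing cycle (Lemma \ref{non-c-o}); by integrability those orbits therefore escape to points at infinity of $L_h$, and Lemmas \ref{inf-center} and \ref{k1} show that on the receiving branches the one-dimensional equation (\ref{vf-inf}) has exponent $\mathsf{k}\ge 1$, i.e.\ the pulled-back period form $\widetilde{\omega}$ has a pole there. The multiplicity bound is then purely local: Lemma \ref{n/2} converts ``$\widetilde{\omega}$ has a pole over $P^i$'' into $n_i\ge(n+1)/2$ via Newton-polygon inequalities ($\mathsf{p}\ge\mathsf{q}/2$, $N_i+mk_{N_i}\ge(n+1)\mathsf{p}$, $k_{N_i}\le n_i$, $N_i\le n_i\mathsf{q}$), with the $h$-dependence of the Puiseux coefficients controlled by Lemma \ref{inf-ind}. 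Note that the pole occurs exactly in the non-generic local geometry your model excludes, which is why the Newton polygon (rather than a generic normal form) is the right tool. To salvage your route you would essentially have to reprove these lemmas, in particular the degenerate local analysis, so the proposal as it stands is a program with the decisive steps missing rather than a proof.
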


In  this paper, we will also show an interesting relation between Gavrilov's question and the following conjecture, which was
claimed by X. Jarque and J. Villadelprat in \cite{J-V}, on real systems (\ref{H-C}), i.e., $(x,y)\in \mathbb{R}^2,$  and $t\in \mathbb{R}$.

\begin{conjecture}[Jarque-Villadelprat conjecture]
	If $n$ is even, then the real system (\ref{H-C}) has no isochronous centers.
\end{conjecture}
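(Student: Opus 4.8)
The plan is a proof by contradiction that converts the parity of $n$ into an incompatibility between two facts that an isochronous center would force: that the real period $T(h)=\oint_{\gamma_h}dt$ is a nonzero \emph{real} constant, and that, up to the factor $2\pi i$, it is a sum of residues at the points at infinity. So suppose the real system (\ref{H-C}) with $n$ even has an isochronous center at the origin, $H=(x^2+y^2)/2+\text{h.o.t.}$. For $h$ in a real neighbourhood of $0$ the vanishing cycle $\gamma_h$ is realized by the genuine periodic orbit, i.e. by the bounded real oval of $\{H=h\}$; in particular $\gamma_h$ is invariant under the anti-holomorphic involution $\sigma$ (complex conjugation) of the compact Riemann surface $\bar{L}_h$, and $\gamma_h$ lies in the affine part, away from the line at infinity. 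The time form $dt=dx/(-H_y)=dy/H_x$ is a \emph{real} meromorphic $1$-form on $\bar{L}_h$ whose poles all lie at the points at infinity, so $\sigma^\ast(dt)=\overline{dt}$; hence the residue of $dt$ at a real pole is real, while the residues at a conjugate pair of poles are complex conjugates of one another and so sum to a real number.

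The first step is to show that the evenness of $n$ produces a \emph{real} point at infinity on $\bar L_h$. Since $n+1$ is odd, Theorem \ref{non-iso} supplies a factor of $H_{n+1}$ of multiplicity at least $(n+1)/2$, hence at least $(n+2)/2$; this factor cannot be one member of a conjugate pair of complex factors, for then its conjugate would divide $H_{n+1}$ with the same multiplicity and the total degree would exceed $n+1$. Thus the high-multiplicity factor is real, and it determines a real point $p_\infty$ of $\bar L_h$ on the line at infinity, with $\sigma(p_\infty)=p_\infty$ and $p_\infty\notin\gamma_h$.

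The second step settles the case in which $L_0$ carries only the single singularity. There Theorem \ref{h-0-c} applies and $\gamma_h$ is null-homologous on $\bar L_h$, so the embedded oval separates the surface into two pieces $A$ and $B$, and since the only poles of $dt$ are at infinity the residue theorem gives $T(h)=2\pi i\sum_{q\in A}\operatorname{Res}_q(dt)$. Now $\sigma$ either swaps $A$ and $B$ or preserves each of them. If $\sigma(A)=B$, then every $\sigma$-fixed point must lie on $\gamma_h$; but $p_\infty$ is $\sigma$-fixed and is not on $\gamma_h$, a contradiction. If instead $\sigma(A)=A$, then the enclosed poles form a $\sigma$-invariant set, so $\sum_{q\in A}\operatorname{Res}_q(dt)$ is real, being a sum of residues at real poles (each real) together with residues at conjugate pairs of poles (each pair summing to a real number); then $T(h)\in 2\pi i\,\mathbb{R}$ is purely imaginary, contradicting that $T(h)$ is a nonzero real number. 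Either alternative is impossible, so for even $n$ there is no isochronous center whose critical level curve carries a single singularity.

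The main obstacle is the complementary case in which $L_0$ carries several singularities. Then Theorem \ref{h-0-c} no longer applies and, as Example 3.23 of \cite{C-M-V} shows, $\gamma_h$ need not be null-homologous; without the separation $\bar L_h=A\sqcup\gamma_h\sqcup B$ the period can no longer be written as an enclosed-residue sum, and the parity argument of the previous step breaks down. This is precisely the regime governed by Gavrilov's Question \ref{topo-iso-q}, and it is here that I would make the relation explicit: a positive answer to Question \ref{topo-iso-q} for real systems of even degree would restore the null-homology of $\gamma_h$ in the remaining case and, combined with the real point at infinity $p_\infty$ and the residue computation above, would complete the proof of the conjecture. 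The delicate points I expect to absorb most of the work are verifying that the high-multiplicity real factor furnished by Theorem \ref{non-iso} really yields a \emph{real} point of the normalization $\bar L_h$ (rather than a point carrying only conjugate complex branches), and, in the multi-singularity regime, ruling out cancellations among the residue contributions attached to the several singular points of $L_0$.
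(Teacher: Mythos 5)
First, a point of logic that dominates everything else: the statement you were given is stated in the paper as a \emph{conjecture} and is explicitly described there as still open; the paper contains no proof of it. What the paper actually proves are two partial results: Theorem \ref{G-J-V} (for a real system of even degree $n$, the vanishing cycle of a center is never null-homologous on the complexification of a generic level curve) and the closing Corollary (the conjecture holds whenever each critical level curve carrying a center has a single singularity, obtained by combining Theorem \ref{G-J-V} with Theorem \ref{h-0-c}). Your proposal, correctly and candidly, delivers exactly this much and no more: a proof in the single-singularity case, plus the observation that the remaining multi-singularity case would follow from a positive answer to Question \ref{topo-iso-q} for real even-degree systems. So, judged as a proof of the conjecture, your attempt has a genuine gap --- the multi-singularity case --- but it is the same gap the paper itself leaves open, and you identify it explicitly rather than papering over it.

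Where you genuinely differ from the paper is the method in the part that can be proved. The paper's route is intersection-theoretic: since $\deg H=n+1$ is odd, the real curve $L_h$ has, besides the oval $\gamma_h$, an unbounded real branch $\gamma'_h$; a path $l_h$ in the complex curve joining the two, meeting $\mathbb{R}^2$ transversally only at its endpoints, together with its complex conjugate forms a cycle meeting $\gamma_h$ in exactly one point, so $\gamma_h$ has nonzero intersection number with it and cannot be null-homologous (this is Theorem \ref{G-J-V}); Theorem \ref{h-0-c} then gives the Corollary. You instead assume null-homology via Theorem \ref{h-0-c}, split $\overline{L}_h=A\sqcup\gamma_h\sqcup B$, and play the residue theorem against the anti-holomorphic involution $\sigma$: either $\sigma$ swaps $A$ and $B$ (excluded by a $\sigma$-fixed point off $\gamma_h$) or preserves them, in which case $T(h)\in 2\pi \mathrm{i}\,\mathbb{R}$, contradicting that the real period is real and nonzero. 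This is a correct and attractive alternative --- it turns the parity of $n$ into an algebraic contradiction rather than a topological one. Two remarks on your flagged weak point: the claim that the high-multiplicity real factor of $H_{n+1}$ produces a $\sigma$-fixed point of the \emph{normalization} is indeed not automatic (the branches over a real point at infinity may be exchanged by $\sigma$), but the repair is cheap and is in effect the paper's own device: since $n+1$ is odd, the real affine curve has smooth real points off the oval (the unbounded branch $\gamma'_h$), and any such point is a $\sigma$-fixed point of the Riemann surface lying in $A\cup B$. Using it instead of $p_\infty$ also removes the appeal to Theorem \ref{non-iso}, so that your argument, like the paper's Theorem \ref{G-J-V}, needs no isochronicity hypothesis at all in that step.
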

At present, this conjecture is still open
and a recent development  can be found in \cite{Cre-Pala}.
The following theorem indicates that if the Jarque-Villadelprat conjecture is not true,
then the Gavrilov's question must have a negative answer for  such real systems.
\begin{theorem}\label{G-J-V}
For any isochronous center of a real system (\ref{H-C}) with even $n$, the corresponding vanishing cycle  can not be homologous to  zero on the projective closure of the complexification of a generic real level curve.
\end{theorem}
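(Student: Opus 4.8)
The plan is to prove the contrapositive topological statement directly: for even $n$ the vanishing cycle $\gamma_h$ is \emph{non-separating} on the compact Riemann surface $\overline{L}_h$ of the complexified level curve, and therefore cannot be null-homologous. Recall that a connected simple closed curve on a closed orientable surface bounds (is homologous to zero) if and only if it separates the surface; hence it suffices to show that $\overline{L}_h\setminus\gamma_h$ is connected. For $h$ generic the affine curve is smooth, so $\gamma_h$ --- the real oval encircling the center --- consists of smooth points, and I may freely use the complex conjugation $\sigma\colon(x,y)\mapsto(\overline{x},\overline{y})$, which is an anti-holomorphic involution of $\overline{L}_h$ whose fixed-point set is exactly the real locus $\overline{L}_h(\mathbb{R})$ and which contains $\gamma_h$.

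The engine of the argument is a local normal form for $\sigma$ along $\gamma_h$. At each point $p\in\gamma_h$ there is a local holomorphic coordinate $z$ on $\overline{L}_h$ in which $\sigma(z)=\overline{z}$ and $\gamma_h=\{\operatorname{Im}z=0\}$; consequently $\sigma$ interchanges the two local sides $\{\operatorname{Im}z>0\}$ and $\{\operatorname{Im}z<0\}$ of the oval. I would then argue by contradiction: if $\gamma_h$ separated $\overline{L}_h$ into two pieces $\overline{L}_h=A\cup_{\gamma_h}B$, the side-swapping forces $\sigma(A)=B$, so $\sigma$ has no fixed points off $\gamma_h$. This means $\operatorname{Fix}(\sigma)=\gamma_h$, i.e. the entire real locus $\overline{L}_h(\mathbb{R})$ equals the single bounded oval $\gamma_h$. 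Note that this step never uses isochronicity, which is why the conclusion holds for \emph{every} center, thereby linking a hypothetical even-degree isochronous center to a negative answer to Gavrilov's question.

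The parity of the degree now yields the contradiction. Since $n$ is even, $\deg H=n+1$ is odd, so the leading form $H_{n+1}(x,y)$ is a nonzero real homogeneous polynomial of odd degree; its complex roots occur in conjugate pairs, so it must possess at least one real linear factor. That factor determines a real point of $\overline{L}_h$ on the line at infinity, hence a point of $\operatorname{Fix}(\sigma)$ lying off the bounded oval $\gamma_h$. This contradicts $\operatorname{Fix}(\sigma)=\gamma_h$. Therefore $\gamma_h$ does not separate $\overline{L}_h$, and $[\gamma_h]\neq 0$ in $\mathcal{H}_1(\overline{L}_h,\mathbb{Z})$, as claimed.

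I expect the main technical obstacle to lie in the behavior at infinity. The projective closure of $L_h$ may be singular on the line at infinity, so one must pass to the normalization (the actual Riemann surface) and verify two points carefully: that $\sigma$ lifts to an anti-holomorphic involution of the normalization, and that the real point at infinity produced above really lifts to a genuine real point there, distinct from $\gamma_h$. Guaranteeing a real \emph{branch} at infinity --- rather than a merely formal real factor of $H_{n+1}$ when its zero is tangential or non-reduced --- is the delicate part; once a single real point of $\operatorname{Fix}(\sigma)$ off $\gamma_h$ is secured on the Riemann surface, the separation argument closes immediately.
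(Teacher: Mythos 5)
Your framework---reduce ``null-homologous'' to ``separating,'' then use the anti-holomorphic involution $\sigma$ to show that a separating oval would force $\operatorname{Fix}(\sigma)=\gamma_h$---is sound, and it is genuinely different from the paper's argument (the paper uses the odd degree of $H$ to produce an \emph{unbounded} real component $\gamma'_h$, joins $\gamma_h$ to $\gamma'_h$ by a complex arc $l_h$ meeting $\mathbb{R}^2$ transversally only at its two endpoints, and observes that $l_h$ together with its complex conjugate is a cycle of intersection number $\pm 1$ with $\gamma_h$). However, your proof has a genuine gap at exactly the one step where the parity of $n$ enters, and it is the step you yourself flag as unresolved. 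A real linear factor of $H_{n+1}$ gives a real point $P$ of the projective closure on the line at infinity, but a real point of a real curve need not carry any \emph{real branch}, and only branches---i.e.\ points of the normalization $S_h$---can yield fixed points of the lifted involution $\tilde\sigma$. If all branches of $\overline{L}_h$ at $P$ occur in conjugate pairs (local models $u^2+v^2=0$ or $v^2+u^4=0$, where the real point is isolated in the real locus), then $\Pi_h^{-1}(P)$ contains no fixed point of $\tilde\sigma$ whatsoever, and the contradiction with $\operatorname{Fix}(\sigma)=\gamma_h$ evaporates. Since the real factor you produce may be non-reduced (multiplicity $n_i>1$), this scenario is not excluded by anything you wrote, so the proof is incomplete as it stands.

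The gap can be closed, but it requires an actual argument, for instance one of the following. (i) Parity of intersection multiplicity: since $n+1$ is odd and non-real factors of $H_{n+1}$ pair up with equal multiplicities, some real factor $(\alpha_i x-\beta_i y)$ has \emph{odd} multiplicity $n_i$; this $n_i$ is the intersection multiplicity of $\overline{L}_h$ with the line at infinity at $P^i$, hence equals the sum of the intersection multiplicities of the individual branches at $P^i$ with that (real) line; conjugation permutes these branches preserving each such multiplicity, so conjugate pairs contribute evenly, and odd $n_i$ forces a self-conjugate branch, i.e.\ a genuine point of $\operatorname{Fix}(\tilde\sigma)$ on $S_h$ lying over $P^i$. (ii) The paper's route: odd degree forces the real affine level set $\{H=h\}\subset\mathbb{R}^2$ to be unbounded (both signs of $H-h$ occur outside every large disk, so a bounded level set is impossible); lifting an unbounded real component to $S_h$, any accumulation point of it over the line at infinity is a fixed point of $\tilde\sigma$, because the fixed-point set of a continuous involution is closed. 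Either way one secures a point of $\operatorname{Fix}(\tilde\sigma)$ off the bounded oval $\gamma_h$, and then your separation argument does close the proof.
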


The paper is organized as follows.
We shall first introduce some properties on the commuting real differential systems(or real vector fields)  induced by system (\ref{H-C}) and provide a powerful technique to extend the  transformation linearizing an isochronous center.
Then we give the detailed proof of the main results and some applications.

\section{Commuting real systems}\label{continu}

Note that if  the origin is an isochronous  center of Morse type for system  (\ref{H-C}),
then there exists an analytic area-preserving transformation(see, e.g, \cite{AGR,L-R,M-V})
$$\Phi:\ (x,y)\mapsto (u(x,y),v(x,y))$$
changing system  (\ref{H-C})
to a linear system
\begin{eqnarray}\label{H-C-L}
\left(
\begin{array}{c}
	\frac{d u}{d t} \\
	\frac{d v}{d t} \\
\end{array}
\right)
=
\left(
\begin{array}{r}
	-v \\
	u \\
\end{array}
\right),
\end{eqnarray}
here we say $\Phi $ is area-preserving is equivalent to say
its Jacobian determinant $\det (J(\Phi)) \equiv 1$, where 
$$J(\Phi)=\left( \begin{array}{cc}
	\frac{\partial u}{\partial x}	& \frac{\partial u}{\partial y} \\
	\frac{\partial v}{\partial x}	& \frac{\partial v}{\partial y}
\end{array}\right).$$
Generally speaking, $\Phi$ is only well defined  in a small neighborhood of the origin   $\mathcal{N} (O)\subseteq \mathbb{C}^2$.

By taking advantage of constant Jacobian determinant,
one can construct another   complex system in $\mathcal{N} (O)$ as follows
\begin{eqnarray}\label{g-C}
\left(
  \begin{array}{c}
    \frac{d x}{d t} \\
    \frac{d y}{d t} \\
  \end{array}
\right)
=
(J^{T}J)^{-1}\left(
  \begin{array}{c}
   \frac{\partial H}{\partial x} \\
  \frac{\partial H}{\partial y} \\
  \end{array}
\right), 
\end{eqnarray}
which can be also  linearized to a linear system
\begin{eqnarray}\label{g-C-l}
\left(
\begin{array}{c}
	\frac{d u}{d t} \\
	\frac{d v}{d t} \\
\end{array}
\right)
=
\left(
\begin{array}{r}
	u \\
	v \\
\end{array}
\right)
\end{eqnarray}
by the same transformation $\Phi$, for the reasons that 
 \begin{eqnarray*}
	J^{-1}= \left(
	\begin{array}{rr}
		\frac{\partial v}{\partial y} & -\frac{\partial u}{\partial y}\\
		-\frac{\partial v}{\partial x} & \frac{\partial u}{\partial x}
	\end{array}
	\right)
\end{eqnarray*}
and 
 \begin{eqnarray*}
	(J^{-1})^{T}\left(
	\begin{array}{c}
		\frac{\partial H}{\partial x} \\
		\frac{\partial H}{\partial y} \\
	\end{array}
	\right)=\left(
	\begin{array}{rr}
		\frac{\partial v}{\partial y} & -\frac{\partial v}{\partial x}\\ 
		-\frac{\partial u}{\partial y} & \frac{\partial u}{\partial x}
	\end{array}
	\right)
	\left(
	\begin{array}{c}
		\frac{\partial H}{\partial x} \\
		\frac{\partial H}{\partial y} \\
	\end{array}
	\right)=
	\left(
	\begin{array}{r}
		u \\
		v \\
	\end{array}
	\right).
\end{eqnarray*}
Consequently,  systems (\ref{H-C}) and (\ref{g-C-l}) 
induce the following four real differential systems(see, e.g. \cite{Cama-LN-Sad-1}) 
by taking $(x,y)\in \mathbb{C}^2\cong \mathbb{R}^4$ but $t\in \mathbb{R}$:
\begin{eqnarray*}
V:
\left(
\begin{array}{c}
	\frac{d x}{d t}\\
	\frac{d y}{d t}
\end{array}
\right)
=
\left(
\begin{array}{r}
-\frac{\partial H}{\partial y}\\
\frac{\partial H}{\partial x}
\end{array}
\right),\
{\rm i}V:
\left(
\begin{array}{c}
	\frac{d x}{d t}\\
	\frac{d y}{d t}
\end{array}
\right)
=
\left(
\begin{array}{r}
- {\rm i}\frac{\partial H}{\partial y}\\
 {\rm i}\frac{\partial H}{\partial x}
\end{array}
\right), 
\end{eqnarray*}
and
\begin{eqnarray*}
	V_g:
	\left(
	\begin{array}{c}
		\frac{d x}{d t} \\
		\frac{d y}{d t} \\
	\end{array}
	\right)
	=
	(J^{T}J)^{-1}\left(
	\begin{array}{c}
		\frac{\partial H}{\partial x} \\
		\frac{\partial H}{\partial y} \\
	\end{array}
	\right), \
{\rm i}V_g:
\left(
  \begin{array}{c}
    \frac{d x}{d t} \\
    \frac{d y}{d t} \\
  \end{array}
\right)
=
(J^{T}J)^{-1}\left(
  \begin{array}{c}
  {\rm i}  \frac{\partial H}{\partial x} \\
  {\rm i} \frac{\partial H}{\partial y} \\
  \end{array}
\right),
\end{eqnarray*}
where ${\rm i}^2=-1$.
They can be transformed to the following four real linear systems simultaneously by  the same $\Phi$ respectively:
\begin{eqnarray*}
V \hookrightarrow V_{\ast}: \left(
\begin{array}[2]{c}
\frac{d u}{d t}\\
\frac{d v}{d t}
\end{array}
\right)
=\left(
\begin{array}[2]{r}
-v\\
u
\end{array}
\right), \
{\rm i}V \hookrightarrow {\rm i}V_{\ast}: 	\left(
\begin{array}[2]{c}
\frac{d u}{d t}\\
\frac{d v}{d t}
\end{array}
\right)
=\left(
\begin{array}[2]{r}
-{\rm i}v\\
{\rm i}u
\end{array}
\right), 
\end{eqnarray*}
and
\begin{eqnarray*}
V_g \hookrightarrow 	V_{g\ast}: 	\left(
	\begin{array}[2]{c}
		\frac{d u}{d t}\\
		\frac{d v}{d t}
	\end{array}
	\right)
	=\left(
	\begin{array}[2]{r}
		u\\
		v
	\end{array}
	\right), \
{\rm i}V_{g} \hookrightarrow  {\rm i}V_{g\ast}: 	\left(
\begin{array}[2]{c}
\frac{d u}{d t}\\
\frac{d v}{d t}
\end{array}
\right)
=\left(
\begin{array}[2]{r}
{\rm i}u\\
{\rm i}v
\end{array}
\right).
\end{eqnarray*}

Letting $u=u_1+ \mathrm{i }u_2$ and $v=v_1 + \mathrm{i} v_2$ and
regarding $\mathbb{C}^2 \cong \mathbb{R}^4=\{(u_1, u_2, v_1, v_2) \}$,
 the coefficient matrices of $V_{\ast}$, ${\rm i}V_{\ast}$, $V_{g\ast}$ and  ${\rm i}V_{g\ast}$
are respectively
\begin{eqnarray*}
\begin{array}{lcclcc}
M_1
&=&\left(
   \begin{array}{rr}
     0 & -I_2\\
     I_2 & 0\\
   \end{array}
 \right)
,\
&M_2
&=&\left(
   \begin{array}{rr}
     0 & -E_2\\
     E_2 & 0\\
   \end{array}
 \right),\\ \\
M_3
&=&\left(
   \begin{array}{rr}
     I_2 & 0\\
     0 & I_2\\
   \end{array}
 \right),\
 & M_4
 &=&\left(
 \begin{array}{rr}
 	E_2 & 0\\
 	0 & E_2\\
 \end{array}
 \right),
  \end{array}
\end{eqnarray*}
 where
\begin{eqnarray*}
I_2
=\left(
   \begin{array}{cc}
     1 & 0\\
     0 & 1\\
   \end{array}
 \right)
,\
E_2
=\left(
   \begin{array}{cc}
     0 & -1\\
     1 & 0\\
   \end{array}
 \right).
\end{eqnarray*}
Obviously we have  $$M_{i}M_{j}=M_{j}M_{i}, \ \forall i,j=1,2,3,4.$$
Due to that $\Phi$ is a diffeomorphism, one can get  the following important properties for vector fields $V$, ${\rm i}V$, $V_g$, and $\ {\rm i}V_g$:

\begin{enumerate}
	
	\item they are commutative  pairwise everywhere in $\mathcal{N} (O)$, i.e., as real vector fields, 
	the  Lie bracket of any two of them vanishes. So for any two points $p_1,p_2$ in $\mathcal{N} (O)$ except $O$, it takes the same time along any two continuous paths connecting $p_1$ and $p_2$ consisting of finitely many trajectories of those vector fields.
	
	\item their trajectories are transversal pairwise everywhere on $\mathcal{N} (O)-L_0$; while on $L_0$, $V$(resp. ${\rm i}V$) coincides with ${\rm i}V_{g}$(resp. $-V_{g}$)
	on one of two branches near $O$ and with $-{\rm i}V_{g}$(resp. $V_{g}$) on the other one;
	
	\item  the domain in which $V_{g}$ and ${\rm i}V_{g}$ can be well defined is the same to the domain of $\Phi$, but $V$ and ${\rm i}V$ are well defined on the whole complex plane $\mathbb{C}^2$;
	
	\item  near the origin, all of the orbits of system $V$ are closed; 
	on the contrary,  system ${\rm i}V $ does not have any closed orbits  in $\mathcal{N} (O)$;
	
	\item the trajectories of systems $V$ and ${\rm i}V$  are both tangent to $L_h$ everywhere,
	so their  restrictions, denoted by  $V_h$ and ${\rm i}V_h$,  are two real systems defined well on $L_h$.
\end{enumerate}

Denote by $\varphi(\cdot,t)$(resp. $ {\rm i}\varphi,\ \varphi_g, \ {\rm i}\varphi_g$,
$\varphi_{\ast},\ {\rm i}\varphi_{\ast},\ \varphi_{g\ast},$ and $ {\rm i}\varphi_{g\ast}$)
the flow map induced by  $V$(resp. ${\rm i}V$, $V_g,\ {\rm i}V_g$, $V_{\ast}$, ${\rm i}V_{\ast}$, $V_{g\ast},$ and $ {\rm i}V_{g\ast}$), i.e., for any given point $p$, $\varphi(p,t)$ takes the value at time $t$ of the solution of equations $V$ with initial value $p$ at  $t=0$. 
The commutativity between those systems means that each one of the flow maps above  preserves the orbits of any other  system in the domain of $\Phi$.
We shall take advantage of  this observation to extend the domain of $\Phi$ to a bigger one than $\mathcal{N} (O)$. 
Without loss of generality,  we assume $\mathcal{N} (O)$ is a sufficiently small and homeomorphic to a open
ball $\{(x,y)\in \mathbb{C}^2: \ \left| x \right| ^2 +\left| y \right| ^2<\epsilon\}$ centered at $O$ with
radius $\epsilon$. Denote by $H$ the following map
\begin{eqnarray*}
	\begin{array}{lrll}
			H: & \mathbb{C}^2&\longrightarrow &\mathbb{C}\\
			& (x,y) &\longmapsto & H(x,y).
	\end{array}	
\end{eqnarray*}

%we can choose a sufficiently small number $\delta >0$ such %that $ \{h\in \mathbb{C}:\ 0<\left| h \right| <\delta\}\subset %H(B_{\epsilon} (O))$ and there is no singular point in  %$\Omega_{\delta}=\bigcup_{0<\left| h \right| <\delta}L_h$.

\noindent \textit{Continuation technique for $\Phi$:}

For a  closed orbit $ \sigma$ of $V$ in $\mathcal{N}(O)$ such that $H(\sigma)\neq 0$,   
and two sufficiently small number $t_1,t_2$, the space 
$$\Gamma_{\sigma}\triangleq \cup_{0\leq s\leq t_1}{\rm i}\varphi_{g}\left(\cup_{0\leq t\leq t_2} \varphi_{g}(\sigma,t),s\right)$$
is a real  3-dimension sub-manifold of $\mathcal{N}(O)$ and 
transversal  to ${\rm i} V$ at every point. 
Then $\Phi(\Gamma_{\sigma})$ is also   a real  3-dimension sub-manifold of $\Phi(\mathcal{N}(O))$ and 
transversal  to ${\rm i} V_{\ast}$ at every point.

Along the trajectories of ${\rm i} V$ passing through $\Gamma_{\sigma}$ in $\mathcal{N}(O)$, the transformation $\Phi $  can be expressed by the flow map ${\rm i}\varphi$ as follows: for any point $p\in \Gamma_{\sigma},$ and any sufficiently small $t$, we have 
\begin{equation}\label{con-teq}
	\Phi({\rm i}\varphi(p, t)) = {\rm i}\varphi_{\ast}(\Phi(p), t).
\end{equation}

Clearly the vector fields ${\rm i} V $ and $ {\rm i} V_{\ast}$ are well defined globally, the above equation can be extended to a larger interval $I$ for time $t\in \mathbb{R}$ such that $ {\rm i}\varphi(p, t) \not \in \mathcal{N}(O)$, if ${\rm i} V $ satisfies the following two conditions:
\begin{enumerate}
  \item[C1.] the trajectories of  ${\rm i} V $ could not return into the domain where $\Phi$ has already been defined well;
  \item[C2.] there is  no point $P$ at infinity such that $ {\rm i} \varphi(p, t)$ tends to $P$ as $t$ tends a finite  moment $t_0$ for some a  point $p_0\in \Gamma_{\sigma}$.
\end{enumerate}
If the trajectories of  ${\rm i} V $ from a open subset of $\Gamma_{\sigma}$ go to a point at infinity when $t$ tends $\infty$, then the interval $I$ for those points can be $[0,+\infty)$ or $(-\infty,0]$.
While if C1 holds but C2 not, then $I$ can only be  $[0,t_0)$ or $(t_0,0]$ at such a point $p_0$(see Figure \ref{figures-Con}).

\begin{figure}[ht] 	
 	\centering
 	\includegraphics[]{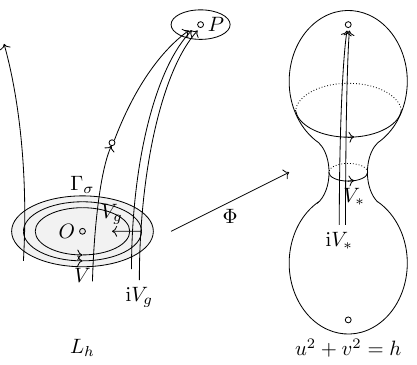}
 	\caption{The continuation of $\Phi$}
 	\label{figures-Con}
 \end{figure}	
    
Noticing that the vector fields $ V $ and $  V_{\ast}$ are also well defined globally, we can also perform the above operation along the trajectories of $ V $ if it satisfies the conditions C1 and C2. 

In a word, one can extend the transformation $\Phi $ to an open domain  $\mathcal{D}$ as big as possible according to the above operation along the trajectories of $V$ and ${\rm i} V $. Although $\mathcal{D}$ may be much bigger than $\mathcal{N}(O)$,  we have  $\mathcal{D}\cap L_h$ is still homeomorphic to $\mathcal{N}(O)\cap L_h$ for any $h$ sufficiently close to $0$.

%\begin{figure}[ht]	
%	\centering
%	\includegraphics[scale=0.3]{Figure_1.jpg}
%	\caption{Continuation technique for $\Phi$}
%	\label{Figure_1}
%\end{figure}	

\section{Points at infinity}
To prove the main results, we still need to know some information about the  points at infinity on $L_h$.
It is better to deal with it  in the projective space $\mathbb{CP}^2$.
Assume  the projective closure $\overline{L_h}$  are defined by the following homogeneous equations
$$\sum_{k=2}^{n+1}z^{n+1-k}H_{k}(x,y) -h z^{n+1}=0,\  [x:y:z]\in \mathbb{CP}^2,$$
where $H_k$ represents the homogeneous part of  degree $k$ of  $H(x,y)$.
For a generic value $h$, the set of singularities on $\overline{L_h}$,
denoted by $\Sigma_h$, consists of only some points at infinity on $L_h$.
The Riemann surface of $L_h$ coincides with the resolution of  $\overline{L_h}$ by a birational map.
%By the classical  theory on the resolution of singularities,
%there exists a birational map from the compact Riemann surface $S_h$ to  $\overline{L}_h$
%\begin{eqnarray*}
%	\Pi_h :\ \ S_h   \longrightarrow \overline{L}_h ,
%\end{eqnarray*}
%which is  holomorphic  on $S_h$ and biholomorphic on $\Pi_{h}^{-1}( \overline{L}_h - \Sigma_h)$.
Generally speaking, 
%$\Pi_{h}^{-1}( p)$ consists only one point 
The algebraic curve $\overline{L_h}$ may have  more than one connected branches near a  point  $P\in \Sigma_h$.
The number of such branches   is equal to the number of essentially different Puiseux expressions  associated to $P$(see, e.g., \cite{Kir}).

Rewriting  the homogeneous part $H_{n+1} (x,y)$ of degree $n+1$  as follows:
\begin{align}\label{H_n}
	H_{n+1} (x,y)= \prod_{i=1}^{N} \left(\alpha_i x - \beta_i y\right)^{n_i},
	\ \ n_i \geq 1, \ \ \sum_{i=1}^{N}n_i =n+1,
\end{align}
where $\alpha_i,\ \beta_i \in \mathbb{C}$ such that $\alpha_i :\beta_i\neq \alpha_j :\beta_j$ if $i\neq j,$
the projective coordinate of a point $P^{i}\in \Sigma_{h}$ can be  
represented by $[\beta_i :\alpha_i : 0]$.
%Let $ P=\{P^{i},\ i=1,\cdots,N\}$ and 
%\begin{eqnarray*}
%	\begin{array}{lclcl}
%\widetilde{P}_{h}^{i}&=&\Pi_{h}^{-1}(P^{i})&=&\{\widetilde{P}_{h}^{i1},\cdots,\widetilde{P}_{h}^{im_i}\},$ $m_i \in \mathbb{N},\\
%\widetilde{P}_{h}&=&\Pi_{h}^{-1}(P)&=&\bigcup_{i=1}^{N}\widetilde{P}_{h}^{i}.
%	\end{array}
%\end{eqnarray*}
Up to a projective change of coordinates, 
we can always assume its projective coordinate is $[1:0:0]$.
Then it is convenient to adopt a pair of new affine coordinates $(X,Y)$, where $$X=\frac{1}{x},\ Y=\frac{y}{x},$$
and the Puiseux expressions near $P^{i}$ are totally determined by
 the Puiseux expressions of  equation
\begin{eqnarray}\label{Pui-eqn}
	H_{h}^{\ast}(X,Y)=X^{n+1} H\left(\frac{1}{X},\frac{Y}{X}\right)-hX^{n+1}=0
\end{eqnarray}
near the origin.
According to the classical theory of Puiseux(see, e.g., (\cite{Fischer})), 
each branch of an algebraic curve near a singularity can be  parameterized  by a Puiseux series of the following form.
\begin{lemma}[Puiseux]\label{Pui-par}
	If $H_{h}^{\ast}(0,0)=0$ and $H_{h}^{\ast}(0,Y)\neq 0$,
	then there exist  numbers $\mathsf{p}, \mathsf{q}\in \mathbb{Z}_{+}$,
	a parameter $s\in \mathbb{C}$, and a a holomorphic function $\rho(s)=s^{ \mathsf{q}} (c_0+\sum_{i= 1}^{+\infty}c_i s^i),\ c_0 \neq 0,$
	such that $H_{h}^{\ast}(s^{\mathsf{p}}, s^{ \mathsf{q}} \rho(s))= 0$ for all $s$ in a neighbourhood of $0$.
\end{lemma}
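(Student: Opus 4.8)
The plan is to recognize the statement as the classical Newton--Puiseux parametrization theorem for the plane algebraic curve $H_h^\ast(X,Y)=0$, and to prove it in the three standard stages: reduction to a distinguished polynomial in $Y$, construction of a \emph{formal} fractional-power-series solution by the Newton polygon, and verification that this formal series actually converges.

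First I would exploit the two hypotheses. Since $H_h^\ast$ is an honest polynomial, namely $X^{n+1}H(1/X,Y/X)-hX^{n+1}$, it is in particular a convergent power series at the origin, and the condition $H_h^\ast(0,0)=0$ places the origin on the curve. The condition $H_h^\ast(0,Y)\neq 0$ says that the restriction to $X=0$ is a nonzero series vanishing at $Y=0$ to some finite order $d\ge 1$; equivalently, $X=0$ is not a component of the curve. This regularity in $Y$ is exactly what is needed to apply the Weierstrass preparation theorem: after shrinking the neighbourhood we may write $H_h^\ast(X,Y)=u(X,Y)\,W(X,Y)$, where $u$ is a holomorphic unit and $W(X,Y)=Y^{d}+a_{d-1}(X)Y^{d-1}+\cdots+a_0(X)$ is a Weierstrass polynomial with $a_j(0)=0$. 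Near the origin the zero set of $H_h^\ast$ coincides with that of $W$, so it suffices to parametrize one branch of $W=0$.

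Next I would run the Newton--Puiseux algorithm on $W$. The plan is to read off from the Newton polygon of $W$ a compact edge of rational slope $-1/\mu$, to solve the associated characteristic equation for a nonzero leading coefficient $c$, and to substitute $Y=X^{\mu}(c+Y_1)$; each such step strictly decreases a suitable complexity measure (for instance the multiplicity of the branch, or the order of the discriminant). After finitely many steps, together with a single clearing substitution $X=s^{\mathsf p}$ in which $\mathsf p$ is the least common multiple of the denominators of the exponents that occur, one obtains a formal solution that, after extracting its lowest-order behaviour, takes the shape required by the statement: a series $\rho(s)=s^{\mathsf q}(c_0+\sum_{i\ge 1}c_i s^i)$ with $c_0\neq 0$ such that $(X,Y)=(s^{\mathsf p},\,s^{\mathsf q}\rho(s))$ formally satisfies $H_h^\ast=0$.

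The hard part will be convergence: the algorithm delivers the coefficients $c_i$ only as a formal series, and one must show this series has positive radius of convergence so that $\rho$ is genuinely holomorphic near $s=0$. I would handle this in one of two equivalent ways. The cleaner route is to choose $\mathsf p$ minimal, so that after $X=s^{\mathsf p}$ the finitely many branches separate and the branch in question becomes simple; then $\partial W/\partial Y$ does not vanish identically along it, and the holomorphic implicit function theorem applied to $W(s^{\mathsf p},Y)=0$ upgrades the formal solution to an honest convergent one. The more self-contained route is a majorant estimate: one bounds $W$ above by an explicitly solvable dominant polynomial and compares the recursively defined coefficients $c_i$ against those of the dominant solution, which are manifestly summable. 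Either argument produces a $\rho$ holomorphic in a neighbourhood of $0$ with $H_h^\ast(s^{\mathsf p},s^{\mathsf q}\rho(s))=0$ there, completing the proof. I expect the bookkeeping of the Newton polygon steps and this convergence estimate to be the only nontrivial ingredients, everything else being purely formal.
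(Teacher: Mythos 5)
The paper does not actually prove this lemma: it is invoked as the classical Newton--Puiseux theorem, with a pointer to Fischer's \emph{Plane Algebraic Curves}, and the text immediately moves on to how the coefficients depend on $h$. Your sketch (Weierstrass preparation using the two hypotheses, the Newton-polygon algorithm producing a formal fractional-power solution, then convergence via the implicit function theorem or a majorant estimate) is precisely the standard textbook proof contained in that cited source, so it is correct and consistent with the paper's treatment.
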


In general the coefficients $\{c_i\}$ may depend on $h$ on different level curve $L_h$, 
so sometimes we replace $\rho(s)$ with $\rho(s,h)$ to emphasize it.
Taking the Puiseux parameterization $x=s^{-\mathsf{p}}, \ y=s^{\mathsf{q}-\mathsf{p}}\rho(s,h)$ into system (\ref{H-C}),
 we obtain a complex 1-dimension ordinary differential equation 
\begin{eqnarray}\label{vf-inf}
\frac{ds}{dt}= \frac{ s^{\mathsf{p}+1}}{\mathsf{p}} \frac{\partial H}{\partial y}(s^{-\mathsf{p}},s^{\mathsf{q}-\mathsf{p}}\rho(s,h))
= \lambda s^{\mathsf{k}}+o(s^{\mathsf{k}})
\end{eqnarray}
on a branch of   $\overline{L_h}$ near $P^{i}$, 
where $\lambda\not =0,\  \mathsf{k}\in \mathbb{Z}.$
Then the real  systems $V_h$ and ${\rm i}V_h$ are changed to the following forms respectively under this parameterization:
\begin{eqnarray}\label{vf-inf-V}
V_h:\ \ 	\frac{ds}{dt}=\lambda s^{\mathsf{k}}+o(s^{\mathsf{k}}),\  t\in \mathbb{R},
\end{eqnarray}
and
\begin{eqnarray}\label{vf-inf-iV}
{\mathrm{i}}V_h: \ \ 	\frac{ds}{dt}={\mathrm{i}} \lambda s^{\mathsf{k}}+o(s^{\mathsf{k}}),\  t\in \mathbb{R}.
\end{eqnarray}
Their topological structures 
can be classified into the following four classes according to the value of $\mathsf{k}$ near $s=0$:
\begin{itemize}
	\item $\mathsf{k}>1$.  The orbits  of real system (\ref{vf-inf-V})(or system (\ref{vf-inf-iV})) form  $2(\mathsf{k}-1)$ petals in a sufficiently small neighborhood of $s=0$, any one of them is tangent  to a separatrix of the petals at $s=0$.
	
	\item $\mathsf{k} = 1$. If $\lambda$ is a pure imaginary number, the point $s=0$ is of center-focus type;
while for $\lambda \in \mathbb{R}$, it is a node,
and for other numbers, it is a focus.
	
	\item $\mathsf{k} =0 $. The point $s=0$ is  not a singularity for real systems (\ref{vf-inf-V}) and (\ref{vf-inf-iV}).
	
	\item $\mathsf{k} < 0$.  The  system (\ref{vf-inf-V})(or  system (\ref{vf-inf-iV})) has a saddle structure in a sufficiently small neighborhood of $0$ except $s=0$.
                                           
\end{itemize}

\begin{remark}\label{fin-inf}
It should be pointed out that  there may exist an orbit of  system (\ref{vf-inf-V})  such that  it can reach the origin $s=0$ at a finite  moment from a fixed point $s\not = 0$.  
It is not difficult to see this phenomenon occurs only in  the cases  $\mathsf{k} =0$ and $\mathsf{k} < 0$, and in the latter one such an orbit is just the separatrix of the saddle.
\end{remark}

The Puiseux parameterizations can be determined completely by the so-called {\emph{ Newton polygon}} of the singularity.
Given an irreducible polynomial $F(X,Y)=\sum_{k,l}b_{kl}X^k Y^l$ with
$F(0, 0) = 0$, denote by $\Lambda(F)$ the carrier of $F(X,Y)$, i.e.
$\Lambda(F) = \{(k, l) \in \mathbb{Z}^2 \ \vert \  b_{kl}\neq 0\}.$
Assuming that $Q_1,Q_2\in \mathbb{R}^2$, let 
$$[Q_1,Q_2]=\{\sigma Q_1 +(1-s)Q_2\ \vert \ 0\leq s\leq 1\}$$
be the straight line segment from $Q_1$ to $Q_2$.
Consider the convex subset $A$ on $\mathbb{R}^2$ consisting of those $(X,Y)\in \mathbb{R}^2$ such that $X\geq X_0$ and $Y\geq Y_0$  for some $(X_0,Y_0)\in [Q_1,Q_2]$ where $Q_1,Q_2\in \Lambda(F)$.
\begin{definition}[Newton Polygon]

The boundary of set $A$ excluding the axes is called the {\emph{Newton polygon}}  of $F(X,Y)$ at the origin, which consists  of only finitely many straight line segments.
\end{definition}

\section{Important lemmas}

In this section,  we first prove the following important lemmas.
It is not difficult to see the vanishing cycle $\gamma_h$ can be represented by a given closed orbit of system $V_h$ near the origin(we still denote this orbit by $\gamma_h$).

\begin{lemma}\label{non-c-o}
If the origin is an isochronous  center of system (\ref{H-C}), 
then every orbit of ${\mathrm i} V_h$ passing through a point on  $\gamma_h$ is not closed for any $h$ sufficiently close to $0$.
\end{lemma}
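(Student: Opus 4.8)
The plan is to argue by contradiction, exploiting the commutativity of the two real flows $\varphi$ and $\mathrm{i}\varphi$ restricted to $L_h$ together with the fact that a generic affine level curve is a \emph{non-compact} Riemann surface. Only the commuting pair $V,\mathrm{i}V$ is needed, not $V_g,\mathrm{i}V_g$. Fix a generic $h\neq 0$, so that $L_h$ is smooth in the affine chart and contains no singularities, and let $\gamma_h$ be the closed orbit of $V_h$ representing the vanishing cycle, with period $T>0$ (the constant value of the period function). Suppose, contrary to the claim, that the orbit $\delta$ of $\mathrm{i}V_h$ through some point $p_0\in\gamma_h$ is closed, with minimal period $\tau>0$. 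I would then study the map
\[
F(s,t)=\varphi\bigl(\mathrm{i}\varphi(p_0,t),s\bigr)=\mathrm{i}\varphi\bigl(\varphi(p_0,s),t\bigr),\qquad (s,t)\in\mathbb{R}^2,
\]
where the second equality is precisely the commutativity of $V$ and $\mathrm{i}V$ (property (1)).

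Next I would show that $F$ is globally defined and doubly periodic. Commutativity propagates closedness: for any $q=\varphi(p_0,s)\in\gamma_h$ one has $\mathrm{i}\varphi(q,\tau)=\mathrm{i}\varphi(\varphi(p_0,s),\tau)=\varphi(\mathrm{i}\varphi(p_0,\tau),s)=\varphi(p_0,s)=q$, so \emph{every} $\mathrm{i}V_h$-orbit issuing from $\gamma_h$ is closed with period $\tau$; symmetrically every $V_h$-orbit issuing from $\delta$ closes after time $T$. In particular the $\mathrm{i}V_h$-flow is complete along the compact set $\gamma_h$ and the $V_h$-flow is complete along $\delta$, so $F$ is defined on all of $\mathbb{R}^2$ and satisfies $F(s+T,t)=F(s,t)=F(s,t+\tau)$. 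Hence $F$ descends to a continuous map $\bar F\colon \mathbb{T}^2\to L_h$ from the torus $\mathbb{T}^2=\mathbb{R}^2/(T\mathbb{Z}\times\tau\mathbb{Z})$.

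Finally I would show $\bar F$ is a local diffeomorphism and derive the contradiction. On $L_h$ the holomorphic field $V_h$ is nonvanishing, and since multiplication by $\mathrm{i}$ has no real eigenvalue on the complex tangent line $T_qL_h$, the real vectors $V_h(q)$ and $\mathrm{i}V_h(q)$ span $T_qL_h$ (this is the transversality underlying property (2)). Because $\varphi$ and $\mathrm{i}\varphi$ commute, $\partial_sF=V_h\circ F$ and $\partial_tF=\mathrm{i}V_h\circ F$, so $DF$ is everywhere an isomorphism and $\bar F$ is a local diffeomorphism. Its image is thus open in $L_h$; being the continuous image of the compact torus it is also compact, hence closed. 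Restricting to the connected component $C$ of $L_h$ meeting the image, that image is a nonempty open and closed subset of $C$, so it equals $C$; but $C$, a component of the affine curve $L_h$, is non-compact, a contradiction. This forces $\delta$ to be non-closed and proves the lemma. The main obstacle is the bookkeeping of the second step—verifying that commutativity simultaneously delivers global definition (completeness of both flows on the relevant compact orbits) and the two independent periods, so that $F$ genuinely factors through the whole torus; once $\bar F$ is a local diffeomorphism from a compact surface, the compact-versus-non-compact dichotomy closes the argument at once.
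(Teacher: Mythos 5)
Your argument breaks down at the second step, ``commutativity propagates closedness,'' and the failure is not a matter of bookkeeping. From $[V,\mathrm{i}V]=0$ you write $\mathrm{i}\varphi(\varphi(p_0,s),\tau)=\varphi(\mathrm{i}\varphi(p_0,\tau),s)$ and use the fact that the right-hand side is defined to conclude that the left-hand side is defined (and equal to it). For commuting but \emph{incomplete} flows on a non-compact manifold this implication is false: flows of commuting fields agree only where both compositions exist, and existence of one ordering does not grant existence of the other. On $L_h$, which is non-compact and not simply connected, an orbit of $\mathrm{i}V_h$ through a point of $\gamma_h$ may reach a point at infinity in \emph{finite} time; the paper explicitly allows for this (Remark \ref{fin-inf}, and the finite exceptional set $\gamma^1_h$ in Lemma \ref{inf-center}). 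If even one point of $\gamma_h$ has such an incomplete vertical orbit, your map $F$ is not defined on all of $\mathbb{R}^2$, $\bar F$ never exists, and the compact-torus contradiction cannot be launched. (A secondary defect: on a non-simply connected surface the two compositions can both be defined and still disagree, because of monodromy of the complex-time coordinate around a puncture; only the local, rectangle-sized version of flow commutation is automatic.)

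That this gap is essential, and not repairable within your scheme, is shown by the fact that your proof uses isochronicity only to know that $\gamma_h$ is a closed orbit of $V_h$; if valid, it would therefore prove the same conclusion for \emph{any} Hamiltonian with a closed orbit on a generic level curve. This is false. Take $H=y^2-x^3+x$ and a real regular value $h$ for which the real curve has an oval $\gamma_h$ (a closed $V_h$-orbit, the vanishing cycle of the Morse center). Then $L_h$ is an elliptic curve minus a single point at infinity, the time form $dx/\frac{\partial H}{\partial y}$ extends to a nowhere-zero holomorphic form on the compact torus, and for such real $h$ the period lattice is rectangular, $\mathbb{Z}T\oplus\mathrm{i}\tau\mathbb{Z}$. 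Hence the $\mathrm{i}V_h$-orbit through \emph{every} point of $\gamma_h$ is closed except for the single point whose vertical orbit hits the puncture in finite time --- exactly the configuration your completeness claim forbids, and exactly why no torus map exists there. So the conclusion of Lemma \ref{non-c-o} genuinely requires isochronicity beyond the closedness of $\gamma_h$, and indeed the paper's proof is of a different nature: it extends the inverse linearization $\Phi^{-1}$ along the (assumed closed) orbits of $\mathrm{i}V$, obtains the commuting field $V_g$ on the swept-out region, deduces that the periods of these closed orbits are independent of $h$, and contradicts this with the blow-up of the $\mathrm{i}V$-period as $h\to 0$ near the Morse point. To salvage your approach you would have to prove the completeness of $\mathrm{i}V_h$ along $\gamma_h$ first, which is essentially the content of the paper's analysis at infinity and cannot be extracted from commutativity alone.
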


\begin{proof}
Suppose otherwise, i.e., suppose there exists a point $p_0 \in \gamma_h$ such that the orbit of ${\mathrm i} V_h$ passing through $p_0$ is closed.
Then by the commutativity between $V$ and ${\mathrm i} V$, 
there exists a sufficiently small neighborhood $\mathcal{N}_{p_0}\subset \Gamma_{\gamma_h} \subset\mathcal{N}(O)$ of $p_0$ such that for any $p\in \mathcal{N}_{p_0}$, 
the orbit of ${\mathrm i} V$ passing through $p$ is also closed.

Consider the inverse transformation $\tilde{\Phi}=\Phi^{-1}$ that  also has a constant Jacobian  
determinant $1$
in the domain $\Phi(\mathcal{N}(O))$ on the $(u,v)$-plane.
Since ${\mathrm i} V_{\ast}$ satisfies the conditions C1 and C2, by using the same continuation technique introduced in Section \ref{continu},
 we can extend $\tilde{\Phi}$ from $\Phi(\mathcal{N}(O))$ to a bigger domain $\tilde{\mathcal{D}}$ along the trajectories of ${\mathrm i} V_{\ast}$   
 by the following equation
$$\tilde{\Phi}({\rm i}\varphi_{\ast}(q, t)) \triangleq {\rm i}\varphi(\Phi^{-1}(q), t),\  \forall q\in \Phi({\mathcal{N}_{p_0}}),$$
such that $\tilde{\Phi}(\tilde{\mathcal{D}})$ covers all closed orbits of ${\rm i} V$ passing through $\mathcal{N}_{p_0}$(see Figure \ref{figures-no-c}).

	\begin{figure}[ht] 	
 	\centering
 	\includegraphics[]{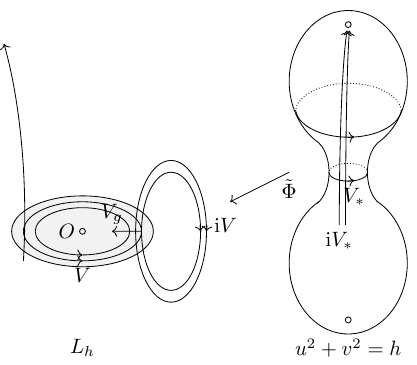}
 	\caption{The continuation of $\tilde{\Phi}$}
 	\label{figures-no-c}
 \end{figure}	

In the domain $\tilde{\Phi}(\tilde{\mathcal{D}})$,
the vector field $V_{g}$ is well defined and commuting with ${\mathrm i} V$. 
This implies that the periods of those closed orbits of  ${\mathrm i} V$ are the same for any 
$h\in H(\mathcal{N}_{p_0})$. 
The above operation is valid for any $h$ sufficiently close to $0$.  
So we get a series of closed orbits of ${\mathrm i} V$ with the same period as $h\rightarrow 0$ along a trajectory of $V_{g}$, whose lengths tend to $0$ since $\vert {\mathrm i} V \vert \rightarrow 0$ when $h\rightarrow 0$. 
This means that such a  closed orbit    also represents  the vanishing cycle of the isochronous center,
which leads a contradiction, because the origin is of Morse type having only one vanishing cycle and the intersection number of two closed orbits of $V  $ and $\mathrm{i}V $ respectively is equal to $1$ so that they can not represent the same one cycle in $\mathcal{H}_1(L_h,\mathbb{Z})$.
Thus the lemma holds.
\end{proof}

By this lemma, we have the following immdiately.

\begin{lemma}\label{inf-center}
	If the origin is an isochronous center of system (\ref{H-C}), then 
there exists  a subset $\gamma^{1}_{h}\subset \gamma_h$ consisting of at most finitely many points  such that:
	
	\begin{enumerate}
		\item for any $p\in \gamma^{1}_{h}$,
	${\rm i }\varphi(p, t) $ tends to a point $P^{i}$ at infinity as $t$ tends to some a finite time $t_0$,  and $V_h$ has a form (\ref{vf-inf-V}) with  number $\mathsf{k}\leq 0$ on one of  the branches of $\overline{L_h}$ near $P^{i}$; 
	
	\item for any $p\in \gamma_h - \gamma^{1}_{h}$,
	${\rm i }\varphi(p, t) $ tends to a point $P^{i}$ at infinity as $t\rightarrow \pm \infty$,  and 
	$V_h$ has a form (\ref{vf-inf-V}) with  number $\mathsf{k}\geq 1$ on one of  the branches  of $\overline{L_h}$  near $P^{i}$.
	\end{enumerate}

\end{lemma}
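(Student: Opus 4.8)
The plan is to follow the orbit of ${\rm i}V_h$ issuing from a point $p\in\gamma_h$ and to show that it must run off to the set $\widetilde{P}_h$ of points at infinity, after which the local normal forms of Section 3 pin down its asymptotics. Fix $p\in\gamma_h$ and write $\gamma_h^{\sigma}:={\rm i}\varphi(\gamma_h,\sigma)$ for the image of the vanishing cycle under the time-$\sigma$ map of ${\rm i}V_h$. Because $V_h$ and ${\rm i}V_h$ commute, the flow ${\rm i}\varphi(\cdot,\sigma)$ carries $V_h$-orbits to $V_h$-orbits while preserving the time parameter; hence each $\gamma_h^{\sigma}$ is again a closed orbit of $V_h$ of the same period $T=T(h)$, and the orbit of ${\rm i}V_h$ through $p$ meets each such curve. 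By Lemma \ref{non-c-o} this orbit is not closed.

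The crux is to prove that the orbit cannot remain in any compact subset of $L_h$. First I would check that the curves $\gamma_h^{\sigma}$ are pairwise distinct: if $\gamma_h^{\sigma_1}=\gamma_h^{\sigma_2}$ for some $\sigma_1\neq\sigma_2$, then setting $P:=\sigma_2-\sigma_1$ and using commutativity again one gets $\gamma_h^{\sigma+P}=\gamma_h^{\sigma}$ for all $\sigma$, so that $R:=\bigcup_{\sigma}\gamma_h^{\sigma}$ is an embedded torus in $L_h$ with empty boundary; being compact, open and closed, it would equal the connected surface $L_h$, contradicting the non-compactness of $L_h$. Granting distinctness, I would use the flat structure attached to the holomorphic time form $dt$ (the form with $dt(V_h)=1$, which has no zeros on $L_h$ since $V_h$ never vanishes there): $V_h$ and ${\rm i}V_h$ form an orthonormal frame for $|dt|^2$, so $(\tau,\sigma)\mapsto{\rm i}\varphi(\varphi(p,\tau),\sigma)$ embeds the flat cylinder $(\mathbb{R}/T\mathbb{Z})\times[0,S]$ isometrically into $L_h$ with area exactly $TS$. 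Since every compact subset of $L_h$ has finite $|dt|^2$-area, letting $S\to+\infty$ forces $\bigcup_{\sigma\ge 0}\gamma_h^{\sigma}$, and with it the orbit ${\rm i}\varphi(p,\sigma)$, to leave every compact set, so that it accumulates only on $\widetilde{P}_h$; running the argument backwards handles $\sigma\to-\infty$.

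Next I would localize near a point $P^i$ in the Puiseux coordinate $s$, in which ${\rm i}V_h$ reads $\tfrac{ds}{d\sigma}={\rm i}\lambda s^{\mathsf{k}}+o(s^{\mathsf{k}})$. An escaping, non-closed, non-recurrent orbit entering such a neighbourhood is forced, by the phase portraits classified in Section 3, to converge to $s=0$, i.e.\ to the single point $P^i$: for $\mathsf{k}\ge 1$ it spirals in (the focus case) or enters a petal along a separatrix, reaching $s=0$ only as $\sigma\to\pm\infty$, while for $\mathsf{k}\le 0$ it reaches $s=0$ at a finite time $t_0$ by Remark \ref{fin-inf}. The case $\mathsf{k}=1$ with $\lambda$ real, where ${\rm i}V_h$ would have a center and hence closed orbits, is excluded by Lemma \ref{non-c-o}. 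This gives precisely the dichotomy asserted: finite-time arrival corresponds to $\mathsf{k}\le 0$ and infinite-time arrival to $\mathsf{k}\ge 1$.

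Finally I would collect into $\gamma_h^{1}$ those $p$ whose orbit arrives in finite time and check finiteness. Finite-time arrival occurs only at the finitely many points $P^i$ with $\mathsf{k}\le 0$; at a $\mathsf{k}\le -1$ point $dt$ has a zero, so only its finitely many vertical separatrices reach it in finite time, while through a $\mathsf{k}=0$ point, a regular point of the flat structure, passes a single ${\rm i}V_h$-orbit. Each of these finitely many orbits meets the compact curve $\gamma_h$ transversally, hence in finitely many points, so $\gamma_h^{1}$ is finite. The step I expect to be the main obstacle is the escape argument of the second paragraph together with its matching to the local picture: turning the linear growth $TS$ of swept area into genuine convergence of the \emph{individual} orbit to one fixed $P^i$, that is, simultaneously ruling out interior recurrence (controlled by the area/torus dichotomy) and excluding the possibility that the orbit oscillates indefinitely between several ends before settling at a single point at infinity.
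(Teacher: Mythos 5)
Your proposal follows the same skeleton as the paper's proof: show every ${\rm i}V_h$-orbit through $\gamma_h$ escapes to a point at infinity, then use the Section 3 normal forms to separate finite-time arrival ($\mathsf{k}\le 0$, via Remark \ref{fin-inf}) from infinite-time arrival ($\mathsf{k}\ge 1$), and finally count the exceptional points. The paper compresses the escape step into the phrase ``by Lemma \ref{non-c-o} and the integrability of ${\rm i}V_h$''; you attempt to actually prove it, and that is where the genuine gap lies. The gap you flag yourself is real and is not a technicality: the area estimate only shows that the swept union $\bigcup_{0\le\sigma\le S}\gamma_h^{\sigma}$ cannot stay in a fixed compact set, which does not exclude that the \emph{individual} orbit ${\rm i}\varphi(p,\cdot)$ returns to a compact set along a sequence $\sigma_n\to\infty$, nor that it oscillates among several ends. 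Moreover, area growth cannot be the right mechanism: if every $P^{i}$ had $\mathsf{k}\le 0$, then $dt$ would extend to a holomorphic $1$-form on $S_h$, the flat area of $L_h$ would be \emph{finite}, and since the flow of ${\rm i}V_h$ preserves the area $|dt|^2$, Poincar\'e recurrence would produce non-closed recurrent orbits through $\gamma_h$ --- a scenario Lemma \ref{non-c-o} alone does not forbid, and which any correct proof of conclusion (2) must rule out by some other means (e.g. single-valuedness of the ``height'' $\sigma$ on the embedded cylinder combined with a flow-box argument at a hypothetical accumulation point, or the continuation of $\Phi$ used in the paper's subsequent lemmas).

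The second gap you do not acknowledge, and it makes the argument circular as written: every step of your second paragraph --- the torus dichotomy, the injectivity of $(\tau,\sigma)\mapsto{\rm i}\varphi(\varphi(p,\tau),\sigma)$, the area count $TS$ --- presupposes that ${\rm i}\varphi(\cdot,\sigma)$ is defined on \emph{all} of $\gamma_h$ for all $\sigma\in[0,S]$, i.e. that the flow is complete along $\gamma_h$. But part (1) of the lemma is precisely about the failure of completeness: as soon as one point of $\gamma_h$ reaches a point at infinity at a finite time $t_1$ (exactly what happens at a $\mathsf{k}\le 0$ point), the sets $\gamma_h^{\sigma}$ with $\sigma\ge t_1$ are no longer closed curves, and both the compactness used in the torus argument and the area bookkeeping collapse. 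A repaired proof must run in the opposite order: first establish, purely locally at the $\mathsf{k}\le 0$ points (where $|dt|^2$ extends with a cone point, so only finitely many vertical separatrices reach the puncture in finite time), that the finite-time orbits form a finite collection of orbits; only then run a non-recurrence argument for the remaining points, with cylinders or arcs controlled up to the escape times. Finally, your closing count ``each of these finitely many orbits meets the compact curve $\gamma_h$ transversally, hence in finitely many points'' is likewise not automatic: transversality makes intersections isolated, but an a priori recurrent orbit could cross $\gamma_h$ infinitely often with accumulating crossings, so this step again requires the non-recurrence statement that has not yet been established.
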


\begin{proof}
By Lemma  \ref{non-c-o}, if the orbit $\delta_p$ of ${\mathrm i} V_h$ passing through a point   $p\in \gamma_h$ can not tend any point at infinity, then there remains two possible cases:

\begin{itemize}
  \item $\delta_p$ tends a closed orbit $\delta_0$ of ${\mathrm i} V_h$. If such a $\delta_0$ exists, then it is isolated or semi-isolated. However,
by the commutativity between $V$ and ${\mathrm i} V$, there exist  annuli such that $\delta_0$ is not the boundary.

  \item $\delta_p$ is ergodic on  a subset of $L_h$. If so, we can also extend the transformation $\tilde{\Phi}$ to a domain $\tilde{\mathcal{D}}$ such that $\tilde{\Phi}(\tilde{\mathcal{D}})$ covers $\delta_h$ as shown in the above lemma along $\delta_p$(in fact, we only need to do this on $L_h$). One can choose  a trajectory of $l_h$ for $V$ such that  $\delta_p \cap l_h$ is dense in $l_h$.
Noticing that ${\rm i} V_{\ast}$ on the curve $C_h$ defined by $u^2 +v^2=h$ is integrability, there is a non trivial analytic first integral $\tilde{G}$ defined on $C_h$ such that $\tilde{G}(\tilde{\Phi}^{-1}(\delta_h))$ is a constant.
Defining a function $G(p)\triangleq \tilde{G}(\tilde{\Phi}^{-1}(p))$ for  $p\in  \tilde{\Phi}(\tilde{\mathcal{D}}) $, it is  a non trivial  analytic first integral for  ${\mathrm i} V$ such that $G(l_h)$ is not a constant. However, $G$ is a constant on a dense subset $\delta_p \cap l_h$  of $l_h$, which implies $G(l_h)$ should be also a constant. This is a contradiction.

\end{itemize}

Finally, every orbit of ${\mathrm i} V_h$ passing through a point   $p\in \gamma_h$
can only tend to a point $P^i$ at infinity on one of  the branches of $\overline{L_h}$ near $P^{i}$.
According to the arguments in Remark \ref{fin-inf},
if the number $\mathsf{k}\leq 0$ for $P^i$, then  ${\rm i }\varphi(p, t)$ will reache at  $P^{i}$ at some a finite  moment  $t_0$.
In addition,
due to that the numbers of points at infinity and separatrices of the saddles  are both finite,
the number of such points $p$ are also finite. 
The lemma is proved.	
\end{proof}

Below we shall show that, under the assumption of Theorem \ref{h-0-c}, in the second case of the above lemma, the number $\mathsf{k}$ must be equal to $1$.

\begin{lemma}\label{k1}
Under the assumption of Theorem \ref{h-0-c},
if   a point $P^{i}$  at infinity on $L_h$ is the limit of ${\mathrm i} \varphi(p,t)$ as $t\rightarrow +\infty$(or $-\infty$) for some a point $ p\in \gamma_h$  on one of the   branches near $P^{i}$, 
then  
the number $\mathsf{k}=1$ for corresponding system (\ref{vf-inf}), 
and the orbits of $V_h$ are closed encircling $P^i$.
\end{lemma}

\begin{proof}

Suppose system (\ref{H-C}) has a Hamiltonian $H=(x^2 +y^2)/2+h.o.t$ and $P^{i}$  has a projective coordinate $[\beta_i:\alpha_i:0]$,
by a linear change of coordinates $(x,y)\mapsto (x_1,y_1) =(-\left(\overline{\beta_i} x+\overline{\alpha_i} y \right)/r_i,\left(\alpha_i x - \beta_i y\right)/r_i)$, where $r_i=\sqrt{| \alpha_i | ^2+| \beta_i | ^2}$, its coordinate can be changed to $[1:0:0]$.
If the linearization transformation $\Phi$ maps $(x,y)$ to $(u,v)$, by taking a linear change of coordinates
 $$\left(\begin{matrix}
  u_1 \\
 v_1
\end{matrix}\right)=
\frac{1}{\sqrt{2}r_i}\left(\begin{matrix}
  \overline{\alpha_i}-{\rm i}\beta_i & -{\rm i}\overline{\alpha_i}+\beta_i \\
 -{\rm i}\alpha_i -\overline{\beta_i} & \alpha_i +{\rm i} \overline{\beta_i}
\end{matrix}\right)
\left(\begin{matrix}
  u \\
 v
\end{matrix}\right),$$ then one of the points at infinity also has  a coordinate $[1:0:0]$ on $(u_1,v_1)$-plane, i.e., the Hamiltonian function has the form $u_1 v_1 $.

Under the conditions of the lemma, the equation (\ref{con-teq}) holds for $I=[0,+\infty)$ and a sufficiently small 
neighborhood of $p$ in $\Gamma_{\gamma_h}$, i.e.,
the domain $\mathcal{D}$ where $\Phi$ is well defined can be 
sufficiently close to $P^{i}$ along the orbits of ${\mathrm i} V$.

 We take the coordinates of Puiseux parameters $(s,h)$ near  $P^{i}$  in $\mathcal{D}$, and  the coordinates $(\tilde{s},h)$ near $[1:0:0]$ in $\Phi(\mathcal{D})$.
Then $\Phi $ induces a map $ \Psi $ from an open set $\mathcal{P}$ in the $(s,h)$ plane    to $(\tilde{s},h)$ plane $\tilde{\mathcal{P}}$,
so that the following diagram is commutative.

\begin{equation}
		\begin{tikzcd}
		\mathcal{D} \ar[d,"\Phi"']   & \ar[l, "\mathrm{R}"']\mathcal{P}  \ar[d,"\Psi"] \\
\Phi(\mathcal{D})   & \ar[l, " \tilde{\mathrm{R}}"] \tilde{\mathcal{P}}
		\end{tikzcd}\ \ \ \ \ \ \ \ \ 		
			\begin{tikzcd}
		(x_1,y_1) \ar[mapsto, d,"\Phi"']   & \ar[mapsto,l, "\mathrm{R}"'](s,h)  \ar[mapsto,d,"\Psi"] \\
(u_1,v_1)   & \ar[mapsto, l, " \tilde{\mathrm{R}}"] (\tilde{s},h)
		\end{tikzcd}
\end{equation}
where 
\begin{align}
	\begin{array}{llcl}
		\mathrm{R}: &(s,h)&\mapsto & (s^{-\mathsf{p}},\ s^{\mathsf{q}-\mathsf{p}}\rho(s,h))\\
		\tilde{\mathrm{R}}: &(\tilde{s},h)&\mapsto & (\tilde{s}^{-1},h\tilde{s})
	\end{array}.
\end{align}
are Puiseux parameterizations respectively, and  
\begin{align}
	\begin{array}{llcl}
         \Phi: & (x_1,y_1)& \mapsto & (u_1,v_1)\\
         \Psi: &(s,h)&\mapsto & (\tilde{s},h)
	\end{array}.
\end{align}

Denoting by $\Psi(s,h)=(\psi(s,h),h)$, we have 
\begin{equation}\label{ss}
	\frac{\partial \psi}{\partial s} \frac{d s}{d t}= \frac{ d \tilde{s}}{dt }=\tilde{s}=\psi(s,h),
\end{equation}
that is,
$$ \frac{\partial \ln \psi}{\partial s} = \frac{  1}{d s/dt }.$$

Recall the system (\ref{vf-inf}) is the following
 $$\frac{d s}{d t}=\lambda s^{\mathsf{k}}+o(s)\triangleq \tau(s),$$
and the period $1$-form $dt =d s/ \tau(s)$ can not have a pole at $P^i$ with zero residue, i.e. in the Laurent series of $1/\tau(s)$, the coefficient of $1/s$ is  a nonzero number $c_0$. Thus, $\psi(s,h)$ can be expressed in $s$ as follows:
$$\psi(s,h)= se^{c_0+\tau_1(s)+\tau_{2}(s)},$$
where $\tau_{1}(s)=\sum_{j\geq 1}c_{1j}s^{j}$, $\tau_{2}(s)=\sum_{-\mathsf{k}+1\leq j\leq -1}c_{2j}s^{j}$, $c_{2(-\mathsf{k}+1)}=1/\lambda$,
and equation (\ref{ss}) becomes 
$$e^{c_0+\tau_1(s)+\tau_{2}(s)} (1+\tau_1'+\tau_2')(\lambda s^{\mathsf{k}}+o(s))=se^{c_0+\tau_1(s)+\tau_{2}(s)}, $$
which  implies that $\tau_2 =0$, $\mathsf{k}=1$. 
Furthermore,  $\psi(s,h)$ can be analytically extended  to  a sufficiently small disc encircling $(0,h)$.
Clearly the orbits of $V_{\ast}$ are all closed, so are the orbits of vector fields on $(\tilde{s},h)$ and $(s,h)$ planes induced by $V$ and $V_{\ast}$ respectively.
Besides, due to that the Puiseux parameterization $\mathrm{R}$ is a finitely many cover mapping near $P^i$, 
the orbits of $V_h$ are also closed.
\end{proof}

\begin{remark}
	In the proof of the above lemma, the conclusion, that the function $\psi(s,h)$ can be analytically extended  to the origin of $(s,h)$ plane, does not mean the transformation $\Phi$ can be also   analytically extended  to $P^i$, one of the reasons is the inverse function of Puiseux parameterization $\mathrm{R}$ is usually multi-valued near $P^i$. 
\end{remark}

This lemma tells us the period $1$-form $\omega$ of 
system (\ref{H-C}) has at least one pole at a point $P^i$ at infinity on a branch of $\overline{L_h}$ near $P^i$. The following lemma  will show that the multiplicity of such a point $P^i$ can not be too low, i.e., we have 

\begin{lemma}\label{n/2}
	If  $\omega$ has a pole at a point $P^i$ at infinity on a branch of  $\overline{L_h}$  near $P^i$, then the multiplicity $n_i$ of $P^i$ satisfies $n_i \geq (n+1)/2$.
\end{lemma}

\begin{proof}
We still assume the projective coordinate of $P^{i}$ is $[1:0:0]$.	
 Let $\{(k_i,l_i), \ i=0,\cdots, r\}$ be the vertex set of the Newton polygon of $H_{h}^{\ast}(X,Y)$ near the origin,
	where $l_0\geq l_1\geq \cdots\geq l_r=0$, $0=k_0 \leq k_1\leq\cdots\leq k_r$.
Denoting  by $N_i=\min\{\mathsf{p}k_i+\mathsf{q} l_i,\ i=0,\cdots,r\},$ the minimum of $\mathsf{p}k_i+\mathsf{q} l_i$,
	there exists a straight line on $(k,l)$-plane
	$$ \mathcal{L}:\ \  \mathsf{p}k+\mathsf{q} l=N_i$$ passing all the points contained in $\{(k_i, l_i): \ \mathsf{p}k_i+\mathsf{q} l_i =N_i\}$.
We define the \emph{Newton principal polynomial} $g_{N_i}(X,Y)$ by  the following
	$$g_{N_i} (X,Y)=\sum_{(k,l)\in\mathcal{ L}}b_{k,l} X^k Y^l,$$
	where $b_{k,l} $ is the  coefficient of term $X^k Y^l$ of $H_{h}^{\ast}(X,Y)$.
	
 Taking the Puiseux parameterization $x=s^{-\mathsf{p}}, \ y=s^{\mathsf{q}-\mathsf{p}}\rho(s)$  into $\omega$,
 we get that
 $\omega$ has a pole at $P^i$ if and only if 
 \begin{eqnarray}\label{pole}
 	Ord \left(\frac{s^{-\mathsf{p}-1}}{ \frac{\partial H}{\partial y}(s^{-\mathsf{p}}, s^{\mathsf{q}-\mathsf{p}}\rho(s))}\right)  \leq -1, 
 \end{eqnarray}
where $Ord(\cdot)$ represents the lowest degree of a Laurent series. 

By comparing the coefficients of  terms $\{s^i\}$ in both sides of equation 
\begin{eqnarray}\label{h-part}
	H_{h}^{\ast}(s^\mathsf{p},s^\mathsf{q} \rho(s,h))=0.
\end{eqnarray}
one can easily get 
 $c_0$ is a root of
	$g_{N_i} (1,Y)=0$,
so we can assume $g_{N_i} (1,Y)=(Y- c_0)^{k_{N_i}}g(Y), $ where $g(c_0)\neq 0$.
Letting $c_m(h)$ be the first coefficient depending on $h$ in $\rho(s,h)=s^{\mathsf{q}}(c_0 +\sum_{i=1}^{\infty}c_i(h)s^i)$ and taking the derivative on $h$ in both sides of equation (\ref{h-part}), we have
\begin{eqnarray}\label{c-h}
s^{\mathsf{q}}\left(\sum_{i\geq m} c'_{i}(h)s^i \right) \left( s^{n\mathsf{p}} \frac{\partial H}{\partial y}(s^{-\mathsf{p}},s^{\mathsf{q}-\mathsf{p}}\rho(s))\right)=s^{(n+1)\mathsf{p}},
\end{eqnarray}

 Comparing the coefficients of  terms $\{s^i\}$ on both sides of the above equation, we can get the following estimations:

\begin{itemize}
	\item $0<m\leq 2\mathsf{p}-\mathsf{q}$, i.e., $\mathsf{p}\geq \mathsf{q}/2$, by inequality (\ref{pole});
	\item $N_i+m k_{N_i} \geq (n+1)\mathsf{p}$, 
	this is because  the lowest degree of the left side of the above equation is not more than 
	$\mathsf{q} +m +(N_i -\mathsf{q})+ ( k_{N_i}-1)m= N_i+m k_{N_i}$. 
\end{itemize}
  In addition, from the convexity of the Newton polygon(see Figure \ref{Newton} below), 
  inequalities
  $k_{N_i}\leq n_i $ and $N_i \leq n_i \mathsf{q}$ are both obvious.
Finally,  combining these  inequalities  we have $n_i \geq (n+1)/2.$

	\begin{figure}[htbp]
	\centering
	\includegraphics[scale=1.2]{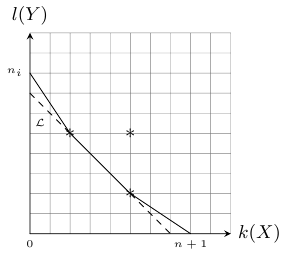}
	\caption{Newton polygon of $H_{h}^{\ast}(X,Y)$}\label{Newton}
	%  \centerline{Picture}
\end{figure}
\end{proof}

In general, 	there may  vanishing cycles associated to a singularity in $\Sigma_h$ on $\overline{L_h}$ when $h\rightarrow 0$, which may be non-trivial cycles in $\mathcal{H}_1(\overline{L}_h,\mathbb{Z})$. However, for Hamiltonian systems with an isochronous center, we have the following lemma.
\begin{lemma}\label{non-t-vc}
	For system (\ref{H-C}), if the origin is an isochronous center, then there does not exist a vanishing cycle $\gamma'_h$ associated to a singular point $P^i$ at infinity such that $ \lim_{h\rightarrow 0}\oint_{\gamma'_h}\omega \neq 0$.
\end{lemma}
\begin{proof}

Suppose otherwise, i.e., suppose that such a vanishing cycle $\gamma'_h$ exists and is associated to a point $P^i$ at infinity with the projective coordinate $[1:0:0]$. 

Denote by $\sharp(\overline{L}_h)$ the number(counting multiplicity) of branches determined by the parts of the segments with slope $\leq -1/2$ in the Newton polygons of $H^{\ast}_h$ at $P^i$. 	 On one hand, when  $\gamma'_h\rightarrow 0$  as $h\rightarrow 0$, it  yields  at least one more branch on $\overline{L_0}$ than  $\overline{L_h}$, where the period $1$-form $\omega$ has a pole at $P^i$ with a nonzero residue. So this branch is determined  by a segment with slope $\leq -1/2$ in the Newton polygons of  $H^{\ast}_0$, by Lemma \ref{n/2} and its proof. This implies 
	  $\sharp(\overline{L}_0)>\sharp(\overline{L}_h)$.
	  % which leads a contradiction. 

On the other hand, 
% this number is totally determined by the part of the segments with slope $\leq -1/2$ in the Newton polygon of $H^{\ast}_h(X,Y)$.
	 the isochronous center is of Morse type, and the part of $H(x,y)$ with degree $2$ has the form $a_{20}x^2+ a_{11}xy+a_{02}y^2$ such that $a_{20}$ and $a_{11}$ can not be zero simultaneously. So we have 
	 \begin{align}
	 	\begin{array}{rcl}
	 		H^{\ast}_h(X,Y)&=&-hX^{n+1}+a_{20}X^{n-1}+a_{11}X^{n-1}Y +a_{02}X^{n-1}Y^2 \\
	 		&&+\sum_{k=0}^{n-2}\sum_{l= 0}^{k}a_{(n+1-k-l)l}X^{k}Y^{l}.
	 	\end{array}
	 \end{align}
	 
	 Then there are the following three possible cases, and each of them yields a contradiction.

	 \begin{itemize}
  \item If the Newton polygon for $h\neq 0$ does not contain two points $(k_1,l_1)=(n-1,1)$ and $(k_2,l_2)=(n+1,0)$ simultaneously, then  $H^{\ast}_0$ and $H^{\ast}_h$  have the same parts of the segments with slope $\leq -1/2$, which give the same number $\sharp(\overline{L}_0)=\sharp(\overline{L}_h)$.
  \item If the Newton polygon for $h\neq 0$ contains two points $(k_1,l_1)=(n-1,1)$ and $(k_2,l_2)=(n+1,0)$ simultaneously, and the line segment $\mathsf{ls_1}$ with slope $1/2$ contains only two points $(n-1,1)$ and $(n+1,0)$, then 
  then  $H^{\ast}_0$ and $H^{\ast}_h$  have the same shapes of the part of the segments with slope $<- 1/2$, and the branch determined by  $\mathsf{ls_1}$ on $\overline{L_h}$ has a Puiseux parameterization $$X=s,\  Y=s^2\left(h+ \sum_{i> 1} d_i(h) s^i\right), $$ 
   which tends to the branch $Y=0$ on $\overline{L_0}$. Consequently, we still have  $\sharp(\overline{L}_0)=\sharp(\overline{L}_h)$.
  \item If the Newton polygon  for $h\neq 0$ contains two points $(k_1,l_1)=(n-1,1)$ and $(k_2,l_2)=(n+1,0)$ simultaneously, but the line segment $\mathsf{ls_2}$ with slope $-1/2$ contains not only these two points, then we will show that in this case the system (\ref{H-C}) can not be linearizable at the point $(x,y)=(0,0)$.
  
  Let $k_0$ be the maximum value such that $(k_0,l)\in \mathsf{ls_2}$ and $k_0< n-1$. Then in this case  $H(x,y)$ has the form 
  
  \begin{align}
  	\begin{array}{rcl}
  		H(x,y)&=&a_{11}xy +a_{22}y^2 +a_{i_0 i_0}x^{i_0}y^{i_0}+\sum_{i< j,i+j\leq 2i_0}a_{ij}x^{i}y^{j}\\
  &&+\sum_{i\leq j,i+j>2i_0}a_{ij}x^{i}y^{j},\\
  	\end{array}
  \end{align}
  where $ a_{i_0 i_0}\neq 0, \ i_0 =(n+1-k_0)/2$.
 In fact, $i_0 a_{i_0 i_0}$ is nothing other than  the first nonzero linearization constant. This is because, from the results in \cite{Gonsa}, the $(2i_0-2)$-jet of system (\ref{H-C}) has admissible nonlinearities and can be linearized by a transformation of the form 
   \begin{align}
  	\begin{array}{rcl}
  		U&=&x+\sum_{i\leq j}u_{ij}x^{i}y^{j}\\
  V&=&y+\sum_{i< j-1}v_{ij}x^{i}y^{j}
  	\end{array}.
  \end{align}
However, this transformation can not change the resonant terms $i_0 a_{i_0 i_0}x^{i_0}y^{i_0-1} $ in $dx/dt$ and $-i_0 a_{i_0 i_0}x^{i_0-1}y^{i_0} $ in  $dy/dt$.
\end{itemize}	  
\end{proof}

\section{The proof of main theorems}

Now we can prove our main theorems. 

\begin{proof}[Proof of Theorem \ref{h-0-c}]
By Lemma \ref{inf-center}, without loss of generality, we assume $\gamma_h$ has been  divided into $k$ parts $\{\gamma^{j}_h,\ j=1,2,...,k\}$, such that for any point $p\in \gamma_{h}^{j}$, ${\rm i }\varphi(p, t)$ goes to the same point $P^{j}$ at infinity on the same branch when  $t\rightarrow +\infty$. Here $\gamma^{j}_h$ may not be a continuous arc of $\gamma_h$ but  a union of finitely many continuous arcs.
By Lemma \ref{k1}, we can choose a closed orbit $\delta_{h}^{j}$ of $V_h$ encircling and  sufficiently close to $P^{j}$ on this branch.

 We shall prove that, for each $1\leq j\leq k$, there exists a moment $t_j$ such that $ \overline{{\rm i }\varphi(\gamma_{h}^{j}, t_j)}=\delta_{h}^{j}$. 
Then 
 $\gamma_h$ is homologous to the summation of those cycles $\sum_{j=1}^{k}\delta_{h}^{j}$, and the theorem holds since that each $\delta_{h}^{j}$ represents a zero homology cycle in $\mathcal{H}_1(\overline{L_h},\mathbb{Z})$.
 
Suppose otherwise, i.e. suppose that
there exists a number $j$ and a branch of $L_h$ near a point $P^{j} $ such that
the loop $\delta_{h}^{j}$  contains at least two continuous arcs $Ar_{h1}$ and $Ar_{h2}$
satisfying that $Ar_1 \subseteq   {\rm i }\varphi(\gamma_{h}^{j}, t_j)$ 
but $Ar_{h2} \cap {\rm i }\varphi(\gamma_{h}^{j}, t_j)=\emptyset$.
%By Lemma \ref{k1} , we know that
%system $V_h$ has a structure of  center  type on this branch.

By  the continuation technique,
 we can extend the transformation $\Phi$  to a domain  $\mathcal{D}$ containing  $Ar_{h1}, \ Ar_{h2}$ and  ${\rm i }\varphi(Ar_{h2}, t')$ for any sufficiently small $\left| h\right| $,
  where $t'\in (-t_0, 0]$ and $t_0$ is a real number such that $t_0> t_j$.
  To avoid that ${\rm i }\varphi(Ar_{h2}, t' )$  meets a point at infinity, 
  $Ar_{h2}$ can be shortened properly.
Consequently, for any point $q''\in Ar_{h2}$, letting $q'={\mathrm i }\varphi(q'', -t_j)$, there exists a point $q\in \gamma_h$ such that
 $\Phi(q')=\Phi(q)$ but $q'\not \in \gamma_h$ (see Figure \ref{Figure_3}).
  	\begin{figure}[ht] 	
 	\centering
 	\includegraphics[]{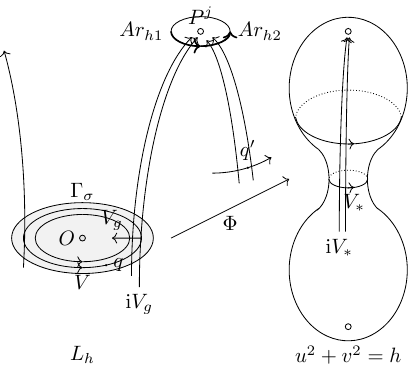}
 	\caption{Proof of Theorem \ref{h-0-c}}
 	\label{Figure_3}
 \end{figure}

Note that  ${\mathrm i }\varphi(Ar_{h2}, -t_j)$  must be a part of a closed orbit $\gamma'_{h}$ of vector field $V$. 
If not so, then for almost every $t$ sufficiently close to $-t_j$, the orbit of $V$ containing ${\mathrm i }\varphi(Ar_{h2}, t)$ tends to a point $P$ at infinity, so  the orbits of ${\rm i}V$ near $P$ either  also tend to $P$ or are closed near $P$, which implies the number $\mathsf{k}\geq 1$ for this branch of $P$. 
However,  $\Phi({\mathrm i }\varphi(Ar_{h2}, t))$ is a part of a closed orbit of $V_{\ast}$, then 
by Lemma \ref{k1} and its proof, the orbit of $V$  must also be closed near $P$, this is a contradiction.

Below we shall show that $\gamma'_h$ must be a vanishing cycle associated to the origin or a singularity at infinity. 

Under the assumption of the theorem, $L_0$ has the same structures at $P^j$  to $L_h$, by Lemma \ref{non-t-vc} and its proof. 
So the above $\mathcal{D}$ can also extended to $L_0$: given two closed orbits $\gamma_0$ and $\delta_0$ of $V_0$ sufficiently close to $O$ and $P^j$ on the  branch of $L_0$ that is the limit of branch containing $Ar_{h2}$ on $L_h$ when $h\rightarrow 0$, then $ \overline{{\rm i }\varphi(\gamma_{0}, t)}\not =\delta_{0}$ for any $t$.
Thus $\mathcal{D}$ can contain
  a trajectory $ Ar_{02}\subset \delta_0 $ but $ \not \subset \overline{{\rm i }\varphi(\gamma_{0}, t)}$ and  
    ${\rm i }\varphi(Ar_{02}, t')$  for some a $t'\in (-\infty, 0]$. 
    
  Noticing that  $L_0$ has only a single finite singular point, 
  ${\rm i }\varphi(Ar_{02}, t')$ can be well defined for $t'\rightarrow -\infty$ and its limit is either the origin or a point at infinity, by Lemma \ref{inf-center} and its proof. Besides,  vector field $V_g$ coincides with $-{\rm i}V$ on $L_0$, so
 ${\mathrm i }\varphi(Ar_{h2}, -t_j)$ will tend to  
$\lim_{t'\rightarrow-\infty}{\rm i }\varphi(Ar_{02}, t')$ along the trajectory of $V_g$. This means $\gamma'_h$ is a vanishing cycle of a singularity at infinity or the origin. 

However, by Lemma \ref{non-t-vc} and the assumption on the period $1$-form, the former case is impossible. As for the latter case,  
recalling   that $\Phi(q')=\Phi(q)$ but ${\rm i }\varphi(Ar_{h2}, -t_j)\bigcap \gamma_h =\emptyset$ and $\Phi$ is a homeomorphism near the origin,
it is also impossible.

\end{proof}

\begin{proof}[Proof of Corollary \ref{J-C}]
	If the linearization change $\Phi $ is 
	well defined on the whole plane $\mathbb{C}^2$, for instance, 
	polynomial map $\Phi_P$ appearing in the Jacobian conjecture, then it maps a  small disc punctured by a  pole of the period $1$-form $\omega $ to a  small disc(topologically) punctured by a  pole of $1$-form $-d u/v$ on $L_h$ for any $h$. This means that $\omega $ dose not have poles with zero residue at infinity. 
	
	Besides, for polynomial map $\Phi_P$, due to 
$$\det \left( \begin{array}{cc}
	\frac{\partial f}{\partial x}	& \frac{\partial f}{\partial y} \\
	\frac{\partial g}{\partial x}	& \frac{\partial g}{\partial y}
\end{array}\right) \not =0,$$
the singularities on critical level curve $H(x,y)=(f^2 +g^2)/2=0$
are just intersections of two algebraic curves $f=0$ and $g=0$.
Thus, by Theorem \ref{h-0-c} and Proposition 6.1 in \cite{Gavri}, the corollary holds.
\end{proof}
	
\begin{proof}[Proof of Theorem \ref{non-iso}]

Noticing that  $\omega$ has a pole at infinity is equivalent to say $\mathsf{k} \geq 1$ for system (\ref{vf-inf}),
this theorem is a direct conclusion of Lemma \ref{inf-center}
 and Lemma \ref{n/2}.
\end{proof}

\section{Non-isochronicity of real  Hamiltonian systems of even degree $n$}

In the last section we focus on the relation between 
the Gavrilov's question and Jarque-Villadelprat conjecture.
It is worthy mentioned that the latter  is not true in the complex setting, some counterexamples can be found in Gavrilov's paper \cite{Gavri}. 
Firstly we shall prove Theorem \ref{G-J-V}.

\begin{proof}[Proof of Theorem \ref{G-J-V}]

If $H(x,y)$ is a real polynomial of odd degree $n+1$,
then the real algebraic curve $L_h$ has at least two connected components on  $\mathbb{R}^2$,
one of them is just the closed orbit $\gamma_h$ near the center which can represent  the corresponding vanishing cycle,
and another one, denoted by $\gamma'_h$,  tends to a point at infinity.
The real systems can be embedded   in $\mathbb{C}^2=\{(x,y)=(x_1+{\rm i}x_2, y_1+{\rm i}y_2)\}\cong \{(x_1,x_2,y_1,y_2)\}= \mathbb{R}^4$.
Then the real plane $\mathbb{R}^2$ is a subset defined by $x_2=y_2=0$,
and the closed orbit $\gamma_h$ on $\mathbb{R}^2$ can be represented by $H(x_1 ,y_1)=h$.

If $$\Phi_{\mathbb{R}}: \mathbb{R}^2 \rightarrow \mathbb{R}^2,\ \  (x_1,y_1)\mapsto (u_1,v_1)=(\phi_1(x_1,y_1),\phi_2(x_1,y_1))$$ is the transformation linearizing real isochronous center,
then  the following map
$$\Phi: \mathbb{C}^2 \rightarrow \mathbb{C}^2,\ \ (x,y)\mapsto (u,v)=(\phi_1(x,y),\phi_2(x,y))$$  can  linearized the complex isochronous center of system (\ref{H-C}).
Denoting by $$\Pi: \mathbb{C}^2 \rightarrow \mathbb{C}^2,\ \ (x,y)\mapsto (\overline{x},\overline{y})$$ 
the conjugate operation on $\mathbb{C}^2$,
we have $\Phi \circ \Pi=\Pi \circ \Phi$, since 
$$(\overline{u},\overline{v})=(\overline{\phi_1(x,y)},\overline{\phi_2(x,y)})=(\phi_1(\overline{x},\overline{y}),\phi_2(\overline{x},\overline{y})).$$

If $\gamma_h$ is a trivial cycle on the closure $\overline{L_h}$ of the generic complex curve $L_h$ defined by $H(x,y)=h$,
then $L_h$ is divided into two path-connected open components $ A_1$ and $A_2$ such that $A_1\cap A_2=\emptyset$ and their common boundary is $\gamma_h$.
Without loss of generality, we assume $\gamma'_h\cap A_1\neq \emptyset$ 
and can construct a smooth curve $l_h\subset L_h$ connecting two points $p\in \gamma_h$ and $p'\in \gamma'_h\cap A_1$,
such that  $l_h$ intersects $\gamma_h$ transversally at only one point $p$.

In the domain $\mathcal{D}$ where $\Phi$ is well defined,
we have $\Pi(A_1\cap \mathcal{D})\subset A_2$, because  $\Phi$ is a homeomorphism so that $\Phi(A_1\cap \mathcal{D})\cap\Phi( A_2\cap \mathcal{D})=\emptyset$
and $\Pi(\Phi(A_1\cap \mathcal{D}))\subset \Phi( A_2\cap \mathcal{D})$.
Therefore  the   complex conjugate $\overline{l_h}$ of $l_h$ belongs to $A_2$ in  $\mathcal{D}$. Finally $l_h\cup \overline{l_h}\cup \{p,p'\}$ forms a closed curve  intersecting $\gamma_h$ at only one point $p$ with intersection number $1$ on $L_h$(see Figure \ref{Figure_4}).
This means that $\gamma_h$ can not be trivial on   $\overline{L_h}$, which leads a contradiction.
	\begin{figure}[ht]
	
	\centering
	\includegraphics[scale=]{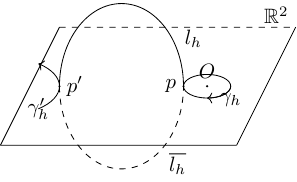}
	\caption{$l_h$ and its  complex conjugate}
	\label{Figure_4}
\end{figure}	
\end{proof}

By Theorem \ref{G-J-V}, we observe an interesting relation between Gavrilov's question
and Jarque-Villadelprat conjecture, that is,
if the latter conjecture is not true, then such real systems possessing isochronous centers provide a negative answer to Gavrilov's question.

In the end, as applications  of Theorem \ref{h-0-c}, 
we present a conclusion which verifies the Jarque-Villadelprat conjecture for a large class of real systems.
Note that in the real setting, an isochronous center must be a non-degenerated singularity, i.e., it must be of Morse type,
so we have

\begin{corollary}
For a real polynomial Hamiltonian system (\ref{H-C}) of even degree, 
if each (complex) critical level curve having a center contains only a single singularity, and the period $1$-form has no pole at infinity with zero residue on any level curve, then it does not admit any isochronous center at all.
\end{corollary}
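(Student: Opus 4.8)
The plan is to obtain the statement as a direct consequence of Theorem \ref{h-0-c} and Theorem \ref{G-J-V}, argued by contradiction. First I would suppose, contrary to the claim, that the real system (\ref{H-C}) of even degree $n$ admits an isochronous center. Invoking the remark recorded immediately above the statement, in the real setting any isochronous center is necessarily non-degenerate, hence of Morse type; this places us inside the hypotheses of both theorems, which are phrased for Morse centers.

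Next I would pass to the complex critical level curve $L_0$ through this center. Regarded in $\mathbb{C}^2$, the center is a complex Morse center, so the hypothesis of the corollary---that every complex critical level curve carrying a center contains only a single singularity---forces $L_0$ to contain exactly one singular point, which can only be the isochronous Morse center itself. The hypotheses of Theorem \ref{h-0-c} are thereby satisfied, and applying it yields that the associated vanishing cycle $\gamma_h$ represents a zero homology cycle on the compact Riemann surface $S_h$ of a generic level curve $L_h$.

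The concluding step is to confront this with Theorem \ref{G-J-V}. Since $n$ is even, that theorem asserts that for any center of the real system the corresponding vanishing cycle \emph{cannot} be homologous to zero on the Riemann surface of the complexification of a generic real level curve. For real $h$ sufficiently close to $0$, the complexification of the real level curve $\{H=h\}$ is precisely the complex curve $L_h$, so the two Riemann surfaces involved are the same object $S_h$; the two conclusions are then directly contradictory. This contradiction rules out the existence of an isochronous center and establishes the corollary.

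I do not expect a genuine obstacle, as the two substantial ingredients are already in hand; the only point demanding care is the bookkeeping that reconciles the two formulations of the ambient Riemann surface and confirms that a real isochronous center complexifies to a Morse center whose critical level curve falls under the single-singularity hypothesis. Beyond that, the argument is a clean logical juxtaposition of Theorem \ref{h-0-c} and Theorem \ref{G-J-V}.
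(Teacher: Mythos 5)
Your proposal is correct and follows exactly the argument the paper intends: the corollary is stated as a direct juxtaposition of Theorem \ref{h-0-c} (single singularity on $L_0$ plus an isochronous Morse center implies the vanishing cycle is null-homologous on $S_h$) with Theorem \ref{G-J-V} (for even $n$ no vanishing cycle of a real center can be null-homologous), using the remark that a real isochronous center is automatically of Morse type. The paper gives no further detail, so your contradiction argument, including the observation that the two theorems refer to the same Riemann surface, is precisely the intended proof.
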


\section*{Acknowledgements}
This work is supported by NSFC 11701217 and
NSF 2017A030310181 of Guangdong  Province(China).
%\section*{References}

\end{document}